\newtheorem{definition}{Definition}[section]
\newtheorem{theorem}[definition]{Theorem}
\newtheorem{lemma}[definition]{Lemma}
\newtheorem{remark}[definition]{Remark}
\newtheorem{example}[definition]{Example}
\newtheorem{corollary}[definition]{Corollary}
\numberwithin{equation}{definition}
\title{COMMON FIXED POINT THEOREMS OF WEAKLY COMPATIBLE MAPS SATISFYING  (f,g)-WEAKLY CONTRACTIVE CONDITION AND  INVARIANT APPROXIMATION RESULTS}
\author{Babu G.V.R\textsuperscript{1},  D. RATNA BATU \textsuperscript{2},  Alemayehu G. Negash\textsuperscript{3,*}\\
\small \textsuperscript{1}Department of Mathematics, Andhra University, India\\
%\small \textsuperscript{*}Visakhapatnam-530 003, Andhra Pradesh, India. \\
\small \textsuperscript{1}Email: \texttt{gvr\_babu@hotmail.com} 
\\
\small \textsuperscript{2}Department of Mathematics, PSCMR College of Engineering and Technology, Vijayawada,
Andhra Pradesh, India\\
%\small \textsuperscript{*}Visakhapatnam-530 003, Andhra Pradesh, India. \\
\small \textsuperscript{2}Email: ratnababud@gmail.com\\
\small \textsuperscript{3}Department of Mathematics, Hampton University, USA\\
$^*$ \textit{Corresponding author.} \\
%\small \textsuperscript{\dag}Visakhapatnam-530 003, Andhra Pradesh, India. \\
\small \textsuperscript{3}Email: \texttt{alemayehu.negash@hamptonu.edu}\
}
\date{}
\begin{document}

\maketitle

\begin{abstract}
We prove the existence of common fixed points for three selfmaps $T,f$ and $g$ defined on a metric space $(X,d)$ satisfying, $T$ is $(f,g)$-weakly contractive; and the pairs $(T,f)$ and $(T,g)$ are weakly compatible. Also, for such $T,f$ and $g$, we prove the convergence of modified Mann iteration and modified Ishikawa iteration with respect to $T,f$ and $g$ to their common fixed point, in a normed space. Further, we obtain invariant approximation results from the set of best approximations to common fixed points of $T,f$ and $g$.
\end{abstract}

\textbf{Keywords and Phrases:} Common fixed points, weakly compatible maps, $(f,g)$-weakly contractive maps and modified Mann iteration, modified Ishikawa iteration, invariant approximations.

\textbf{AMS(2000) Mathematics Subject Classification:} 47H10, 54H25.

\emph{SHORT RUNNING TITLE:} COMMON FIXED POINT THEOREMS OF WEAKLY COMPATIBLE \ldots

\textsuperscript{*}Corresponding author

\section{Introduction}

Let $(X,d)$ be a metric space. We write it simply as $X$. A map $T:X\to X$ is said to be a selfmap of $X$.

\begin{definition}
A selfmap $T$ of $X$ is said to be a \emph{contraction} if there exists a real number $k\in[0,1)$ such that
\end{definition}

\begin{equation}
d(Tx,Ty)\leq k\ d(x,y),\ \text{for all}\ x,y\in X.
\end{equation}
%(1.1.1)

If $k=1$ then we call $T$ a \emph{nonexpansive map}.

In 1997, Alber and Guerre-Delabriere \cite{Alber1997} introduced weakly contractive maps which are extensions of contraction maps and obtained fixed point results in the setting of Hilbert spaces.

Throughout this paper, we denote $R_{+}=[0,\infty)$,

$\Psi\ =\ \{\psi/\psi:R_{+}\to R_{+}$ is continuous on $R_{+}$, $\psi$ is nondecreasing,

\[\psi(t)>0\ \ \text{for}\ \ t>0,\ \psi(0)=0\ \text{and}\ \lim_{t\to\infty}\psi(t )=\infty\}\]

and $Z_{+}\ =\ \{0,1,2,\ldots\}$.

\begin{definition}[\cite{Rhoades2001}]
Let $(X,d)$ be a metric space. Let $K$ be a nonempty subset of $X$. A selfmap $T$ of $K$ is called \emph{weakly contractive} if, there exists $\psi\in\Psi$ such that
\[d(Tx,Ty)\leq d(x,y)-\psi(d(x,y))\ \ \text{for each}\ \ x,y\in K.\]
\end{definition}

We observe that every contraction map is weakly contractive map. But its converse need not be true; i.e., every weakly contractive map need not be a contraction.

\begin{example}
Let $X=R_{+}$, with the usual metric. Define $T:R_{+}\to R_{+}$ by $T(x)=\frac{x}{1+x},x\in X$.

Then $T$ is a weakly contractive map with $\psi(t)=\frac{t^{2}}{1+t},\ t\geq 0$.

But, for any $k\in[0,1)$, $T$ does not satisfy (1.1.1)

when $x=0$ and $0<y<\frac{1-k}{k}$ so that $T$ is not a contraction map.
\end{example}

An interesting result for a single weakly contractive selfmap in complete metric spaces is established by Rhoades (Theorem 1, page no. 2684, \cite{Rhoades2001}), which we call `Rhoades weakly contractive principle'.

\begin{theorem}[\textbf{Rhoades weakly contractive principle}]
Let $(X,d)$ be a complete metric space, $T$ a weakly contractive map. Then $T$ has a unique fixed point in $X$.
\end{theorem}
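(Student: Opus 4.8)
The plan is to use the Picard iteration. Fix $x_0\in X$ and put $x_{n+1}=Tx_n$ for $n\in Z_{+}$. The first step is to control the consecutive distances $a_n:=d(x_n,x_{n+1})$. Applying the weakly contractive inequality with $x=x_{n-1}$, $y=x_n$ gives $a_n\le a_{n-1}-\psi(a_{n-1})\le a_{n-1}$, so $\{a_n\}$ is nonincreasing and bounded below by $0$; hence it converges to some $a\ge 0$. Letting $n\to\infty$ in $a_n\le a_{n-1}-\psi(a_{n-1})$ and using the continuity of $\psi$, we obtain $a\le a-\psi(a)$, i.e.\ $\psi(a)\le 0$; since $\psi(t)>0$ for $t>0$, this forces $a=0$, that is, $d(x_n,x_{n+1})\to 0$.

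The crucial step is to show that $\{x_n\}$ is Cauchy, and I expect this to be the main obstacle, since the weakly contractive condition gives no geometric contraction rate. I would argue by contradiction: if $\{x_n\}$ is not Cauchy, there is $\varepsilon>0$ and, for each $k$, indices $m_k>n_k\ge k$ with $d(x_{m_k},x_{n_k})\ge\varepsilon$, and we may choose $m_k$ minimal with this property, so that $d(x_{m_k-1},x_{n_k})<\varepsilon$. The triangle inequality then yields $\varepsilon\le d(x_{m_k},x_{n_k})\le a_{m_k-1}+d(x_{m_k-1},x_{n_k})<a_{m_k-1}+\varepsilon$, and since $a_{m_k-1}\to 0$ it follows that $d(x_{m_k},x_{n_k})\to\varepsilon$. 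Now insert one more iterate on each side: writing $r_k=d(x_{m_k},x_{n_k})$,
\[
r_k\ \le\ a_{m_k}+d(Tx_{m_k},Tx_{n_k})+a_{n_k}\ \le\ a_{m_k}+r_k-\psi(r_k)+a_{n_k},
\]
so $\psi(r_k)\le a_{m_k}+a_{n_k}\to 0$. Since $\psi$ is continuous and $r_k\to\varepsilon$, we conclude $\psi(\varepsilon)\le 0$, contradicting $\psi(\varepsilon)>0$. Hence $\{x_n\}$ is Cauchy.

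By completeness, $x_n\to z$ for some $z\in X$. To see that $z$ is a fixed point, estimate $d(Tz,z)\le d(Tz,Tx_n)+d(x_{n+1},z)\le d(z,x_n)-\psi(d(z,x_n))+d(x_{n+1},z)\le d(z,x_n)+d(x_{n+1},z)\to 0$, so $Tz=z$. Finally, for uniqueness, if $Tz=z$ and $Tw=w$, then $d(z,w)=d(Tz,Tw)\le d(z,w)-\psi(d(z,w))$, which forces $\psi(d(z,w))\le 0$ and hence $d(z,w)=0$. This completes the plan; the only place where real care is needed is the Cauchy argument, and within it the choice of the minimal index $m_k$ together with the continuity of $\psi$ is what makes the contradiction go through.
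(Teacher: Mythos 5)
Your proposal is correct: Picard iteration, monotone consecutive distances forced to $0$ by continuity of $\psi$, a Cauchy argument by contradiction (minimal index choice plus one-step shift and continuity of $\psi$), then completeness and the standard uniqueness computation all go through. The paper itself states this theorem without proof, citing Rhoades, but your argument is essentially the same scheme the paper uses to prove its three-map generalization (Theorem 2.1) — decreasing $d(y_n,y_{n+1})\to 0$, Cauchy via an $\epsilon$-contradiction with chosen indices $m_k,n_k$, then limit, fixed point, uniqueness — so yours is the single-map specialization of the same approach, differing only in minor bookkeeping of the Cauchy step.
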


In 2006, Beg and Abbas \cite{Beg2006} extended the concept of weakly contractive property of a single selfmap to a pair of selfmaps and established the existence of common fixed points.

\begin{definition}
Let $(X,d)$ be a metric space. Let $T:X\to X$ and $f:X\to X$ be two maps. We say that $T$ \emph{is weakly contractive with respect to $f$} if there exists a $\psi\in\Psi$ such that
\begin{equation}
d(Tx,Ty)\leq d(fx,fy)-\psi(d(fx,fy)),\text{ for each }x,y\in X.
\end{equation}
%(1.5.1)
\end{definition}

\begin{theorem}[\textbf{Beg and Abbas \cite{Beg2006}, Theorem 2.5, page 4}]
Let $(X,d)$ be a metric space and let $T$ be a weakly contractive mapping with respect to $f$. If $T$ and $f$ are weakly compatible and $T(X)\subseteq f(X)$ and $f(X)$ is a complete subspace of $X$, then $f$ and $T$ have a unique common fixed point in $X$.
\end{theorem}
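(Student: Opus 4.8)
The plan is to run the standard Jungck-type iteration adapted to the weakly contractive setting. First I would fix $x_{0}\in X$; since $T(X)\subseteq f(X)$, one can choose inductively $x_{n+1}\in X$ with $fx_{n+1}=Tx_{n}$, and set $y_{n}=Tx_{n}=fx_{n+1}$, so that $\{y_{n}\}\subseteq f(X)$. Applying the weakly contractive inequality with respect to $f$ (Definition 1.5) to the pair $x_{n},x_{n+1}$ gives $d(y_{n},y_{n+1})\le d(y_{n-1},y_{n})-\psi\bigl(d(y_{n-1},y_{n})\bigr)$; hence the scalars $d_{n}:=d(y_{n},y_{n+1})$ form a nonincreasing sequence bounded below by $0$, so $d_{n}\to r$ for some $r\ge 0$. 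Letting $n\to\infty$ and using continuity of $\psi$ gives $r\le r-\psi(r)$, so $\psi(r)\le 0$, and the defining properties of the class $\Psi$ force $r=0$, i.e. $d(y_{n},y_{n+1})\to 0$.

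The step I expect to be the main obstacle is showing that $\{y_{n}\}$ is Cauchy. I would argue by contradiction in the usual way: if $\{y_{n}\}$ is not Cauchy, there is $\varepsilon>0$ and, for each $k$, indices $n_{k}<m_{k}$ with $m_{k}$ minimal satisfying $d(y_{m_{k}},y_{n_{k}})\ge\varepsilon$; minimality then gives $d(y_{m_{k}-1},y_{n_{k}})<\varepsilon$ (for large $k$ necessarily $m_{k}>n_{k}+1$, since $d_{n}\to 0$). Combining this with the triangle inequality and $d_{n}\to 0$ shows $d(y_{m_{k}},y_{n_{k}})\to\varepsilon$ and $d(y_{m_{k}-1},y_{n_{k}-1})\to\varepsilon$. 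Applying the same inequality to $x_{m_{k}},x_{n_{k}}$ yields $d(y_{m_{k}},y_{n_{k}})\le d(y_{m_{k}-1},y_{n_{k}-1})-\psi\bigl(d(y_{m_{k}-1},y_{n_{k}-1})\bigr)$; passing to the limit and using continuity of $\psi$ gives $\varepsilon\le\varepsilon-\psi(\varepsilon)$, contradicting $\psi(\varepsilon)>0$. Hence $\{y_{n}\}$ is Cauchy, and since it lies in the complete subspace $f(X)$ it converges to some $z\in f(X)$; fix $p\in X$ with $fp=z$.

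Finally I would identify $z$ as the common fixed point. Since $fx_{n}=y_{n-1}\to z=fp$, applying Definition 1.5 to $p,x_{n}$ gives $d(Tp,y_{n})\le d(fp,fx_{n})-\psi(d(fp,fx_{n}))\le d(fp,fx_{n})\to 0$, so $y_{n}\to Tp$; uniqueness of limits gives $Tp=fp=z$. Weak compatibility of $(T,f)$ then yields $Tfp=fTp$, i.e. $Tz=fz$. Applying Definition 1.5 to the pair $z,p$ gives $d(Tz,z)=d(Tz,Tp)\le d(fz,fp)-\psi(d(fz,fp))=d(Tz,z)-\psi(d(Tz,z))$, whence $\psi(d(Tz,z))\le 0$ and therefore $Tz=z$; together with $Tz=fz$ this makes $z$ a common fixed point of $T$ and $f$. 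Uniqueness is immediate: if $z,w$ are common fixed points, then $d(z,w)=d(Tz,Tw)\le d(fz,fw)-\psi(d(fz,fw))=d(z,w)-\psi(d(z,w))$ forces $d(z,w)=0$. Apart from the Cauchy step, the only points needing care are the choice of the minimal index $m_{k}$ and verifying that the triangle-inequality corrections all vanish in the limit.
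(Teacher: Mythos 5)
Your proposal is correct, and it follows essentially the same route the paper itself uses for its generalization in Theorem 2.1 (the quoted Beg--Abbas theorem is stated in the paper without proof): a Jungck-type iteration $y_n=Tx_n=fx_{n+1}$, monotone decrease of $d(y_n,y_{n+1})$ to $0$, a Cauchy argument by contradiction using $\psi(\varepsilon)>0$, convergence in the complete subspace, a coincidence point upgraded to a common fixed point via weak compatibility, and the standard uniqueness step. The only point to state explicitly in a final write-up is that the negation of Cauchyness is taken with $m_k>n_k\ge k$, so that the correction terms $d(y_{m_k-1},y_{m_k})$ and $d(y_{n_k-1},y_{n_k})$ indeed tend to $0$; with that made precise, the argument is complete.
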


In 2008, Akbar Azam and Muhammad Shakeel \cite{Azam2008} proved the following theorem, which is an extension of Theorem 1.6 to three selfmaps.

\begin{theorem}[\textbf{A. Azam and M. Shakeel \cite{Azam2008}, Theorem 2.4, page 104}]
Let $X$ be a metric space and $T,f,g$ be selfmaps of $X$ and for each $x,y\in X$, satisfying
\begin{equation}
d(gx,Ty)\leq d(fx,fy)-\varphi(d(fx,fy)),
\end{equation}
%(1.7.1)
where $\psi$ is continuous, nondecreasing and positive on $(0,\infty)$, $\psi(0)=0$ and $\lim\limits_{t\rightarrow\infty}\psi(t)=\infty$. If the pairs $(f,g)$ and $(f,T)$ are weakly compatible and $T(X)\cup g(X)\subset f(X)$ and $f(X)$ is a complete subspace of $X$, then $T,f$ and $g$ have a common fixed point.
\end{theorem}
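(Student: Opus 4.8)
The plan is to combine the Jungck-type iteration for three maps with the Rhoades-type argument for weakly contractive maps used in Theorem~1.4. Here $\varphi$ is the function in the hypothesis; its stated properties are exactly those defining the class $\Psi$, so in particular $\varphi$ is continuous, nondecreasing, with $\varphi(0)=0$ and $\varphi(t)>0$ for $t>0$. Fix $x_0\in X$. Since $T(X)\cup g(X)\subseteq f(X)$, one can recursively choose $x_n\in X$ with $fx_{2n+1}=gx_{2n}$ and $fx_{2n+2}=Tx_{2n+1}$ for all $n\ge0$; set $y_n=fx_n$. Applying (1.7.1) with $(x,y)=(x_{2n},x_{2n+1})$ and with $(x,y)=(x_{2n+2},x_{2n+1})$ shows, for every index $n$, that $d(y_{n+1},y_{n+2})\le d(y_n,y_{n+1})-\varphi\bigl(d(y_n,y_{n+1})\bigr)$. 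Hence $\{d(y_n,y_{n+1})\}$ is nonincreasing and converges to some $r\ge0$; letting $n\to\infty$ in this inequality and using continuity of $\varphi$ gives $r\le r-\varphi(r)$, so $\varphi(r)\le0$, and positivity of $\varphi$ on $(0,\infty)$ forces $r=0$, i.e.\ $d(y_n,y_{n+1})\to0$.

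Next I would prove that $\{y_n\}$ is Cauchy, by contradiction in the style of Rhoades. If it were not, then (since $d(y_n,y_{n+1})\to0$) the subsequence $\{y_{2n}\}$ would not be Cauchy, so there would exist $\varepsilon>0$ and, for each $k$, even indices $m(k)>n(k)\ge k$ with $d(y_{m(k)},y_{n(k)})\ge\varepsilon$ and $m(k)$ chosen least with this property, whence $d(y_{m(k)-2},y_{n(k)})<\varepsilon$. Routine triangle-inequality estimates together with $d(y_n,y_{n+1})\to0$ then force
\[ d(y_{m(k)},y_{n(k)})\to\varepsilon, \qquad d(y_{n(k)+1},y_{m(k)})\to\varepsilon, \qquad d(y_{n(k)},y_{m(k)-1})\to\varepsilon . \]
Since $n(k)$ and $m(k)$ are even, $y_{n(k)+1}=gx_{n(k)}$ and $y_{m(k)}=Tx_{m(k)-1}$, so (1.7.1) applies with $(x,y)=(x_{n(k)},x_{m(k)-1})$, giving
\[ d(y_{n(k)+1},y_{m(k)})\le d(y_{n(k)},y_{m(k)-1})-\varphi\bigl(d(y_{n(k)},y_{m(k)-1})\bigr). \]
Letting $k\to\infty$ and using continuity of $\varphi$ yields $\varepsilon\le\varepsilon-\varphi(\varepsilon)$, contradicting $\varphi(\varepsilon)>0$. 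I expect this step --- and in particular the parity bookkeeping that guarantees the pair of indices fed into the asymmetric estimate $d(gx,Ty)$ is such that its right-hand side is again a distance between two terms of the sequence --- to be the main technical obstacle.

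Since $\{y_n\}\subseteq f(X)$ and $f(X)$ is complete, $y_n\to z$ for some $z\in f(X)$; pick $u\in X$ with $fu=z$. Applying (1.7.1) with $(x,y)=(x_{2n},u)$ and letting $n\to\infty$ (so $gx_{2n}=y_{2n+1}\to z$ and $fx_{2n}\to z=fu$, hence $\varphi(d(fx_{2n},fu))\to0$) gives $d(z,Tu)\le0$, so $Tu=z$; symmetrically, $(x,y)=(u,x_{2n+1})$ gives $gu=z$. Thus $fu=gu=Tu=z$, so $u$ is a coincidence point of both $(f,g)$ and $(f,T)$, and weak compatibility yields $fgu=gfu$ and $fTu=Tfu$, i.e.\ $fz=gz$ and $fz=Tz$. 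Applying (1.7.1) once more with $(x,y)=(z,u)$ gives
\[ d(gz,z)=d(gz,Tu)\le d(fz,fu)-\varphi\bigl(d(fz,fu)\bigr)=d(fz,z)-\varphi\bigl(d(fz,z)\bigr); \]
since $gz=fz$ this forces $\varphi\bigl(d(fz,z)\bigr)\le0$, hence $fz=z$, and therefore $z=fz=gz=Tz$ is a common fixed point of $T,f$ and $g$. (Feeding two such common fixed points into (1.7.1) shows moreover that it is unique, although this is not asserted in the statement.)
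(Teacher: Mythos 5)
This statement is Theorem 1.7, quoted from Azam and Shakeel; the paper itself gives no proof of it, so there is nothing internal to compare against except the paper's analogous argument for Theorem 2.1, which uses exactly the same template. Judged on its own, your proof is correct: the alternating Jungck-type iteration ($y_{2n+1}=gx_{2n}=fx_{2n+1}$, $y_{2n+2}=Tx_{2n+1}=fx_{2n+2}$) keeps the asymmetric condition applicable at every step (including the even-index Cauchy-by-contradiction estimate), the identification of $\varphi$ with the function having the $\Psi$-properties resolves the statement's $\varphi$/$\psi$ typo, and the coincidence-point and weak-compatibility endgame is sound, so no gaps to report.
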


In this paper, we extend the concept of weakly contractive property introduced in Definition 1.5 to three selfmaps in the following way.

\begin{definition}
Let $(X,d)$ be a metric space. Let $T$, $f$ and $g$ be three selfmaps of $X.$ A map $T$ is said to be $(f,g)-$\emph{weakly contractive}, if there exists a $\psi\in\Psi$ such that
\begin{equation}
d(Tx,Ty)\leq\min\{d(fx,gy)-\psi(d(fx,gy)),d(gx,fy)-\psi(d(gx,fy))\}
\end{equation}
%(1.8.1)
for each $x,y\in X.$
\end{definition}

The following two examples suggest that the inequalities (1.7.1) and (1.8.1) are independent.

\begin{example}
Let $X=(\frac{1}{4},1]$ with the usual metric.

Define $T,f$ and $g$ on $X$ by

\[
T(x)=
\begin{cases}
\frac{1}{2}, & \text{if } \frac{1}{4}<x<\frac{2}{3},\\
1-\frac{1}{2}x, & \text{if } \frac{2}{3}\le x\le 1,
\end{cases}
\qquad
f(x)=
\begin{cases}
1, & \text{if } \frac{1}{4}<x<\frac{2}{3},\\
\frac{4}{3}-x, & \text{if } \frac{2}{3}\le x\le 1,
\end{cases}
\quad \text{and} \quad
g(x)=
\begin{cases}
\frac{1}{3}, & \text{if } \frac{1}{4}<x<\frac{2}{3},\\
\frac{4}{3}-x, & \text{if } \frac{2}{3}\le x\le 1.
\end{cases}
\]
Define \(\psi:\mathbb{R}_{+}\to \mathbb{R}_{+}\) by \(\psi(t)=\frac{t^{2}}{1+t}\), \(t\in \mathbb{R}_{+}.\) Then these \(T,f\) and \(g\) satisfy (1.8.1). Now, \(x,y\in(\frac{1}{4},\frac{2}{3})\).\\
Consider \(d(gx,Ty)=\frac{1}{6}\).
But \(d(fx,fy)-\psi(d(fx,fy))=0\), so that \(T,\ f\) and \(g\) do not satisfy the inequality (1.7.1), for any \(\psi\in\Psi\).
\end{example}

\textbf{Example 1.10.} Let \(X=(0,1]\) with the usual metric.
Define \(T,\ f\) and \(g\) on \(X\) by

\[
T(x)=
\begin{cases}
\frac{1}{2}, & \text{if } 0<x<\frac{2}{3},\\
1-\frac{1}{2}x, & \text{if } \frac{2}{3}\le x<1,\\
\frac{2}{3}, & \text{if } x=1,
\end{cases}
\qquad
f(x)=
\begin{cases}
\frac{1}{3}, & \text{if } 0<x<\frac{2}{3},\\
\frac{4}{3}-x, & \text{if } \frac{2}{3}\le x<1,\\
1, & \text{if } x=1,
\end{cases}
\quad \text{and} \quad
g(x)=
\begin{cases}
\frac{1}{2}, & \text{if } 0<x<\frac{2}{3},\\
1-\frac{1}{2}x, & \text{if } \frac{2}{3}\le x\le 1.
\end{cases}
\]
Then clearly \(g(X)\cup T(X)\subseteq f(X)\).\\
Define \(\psi:\mathbb{R}_{+}\to \mathbb{R}_{+}\) by \(\psi(t)=\frac{t}{2},\ t\in \mathbb{R}_{+}.\) Then it is easy to see that \(T,f\) and \(g\) satisfy the inequality (1.7.1).\\
But for \(x=\frac{2}{3}\) and \(y=\frac{5}{6}\),

\(d(Tx,Ty)=\frac{1}{6};\) and \(d(fx,gy)-\psi(d(fx,gy))=\frac{1}{12}-\psi(\frac{1}{12}),\) so that the inequality (1.8.1) does not hold for any \(\psi\).\\
In 2006, Beg and Abbas [3] introduced modified Mann and modified Ishikawa iterations associated with two selfmaps \(T\) and \(f\), and established the convergence results for weakly contractive and weakly compatible maps.

\textbf{Definition 1.11.} Let \(X\) be a Banach space. Assume that \(T(X)\subseteq f(X)\) and \(f(X)\) is a convex subset of \(X\). Define a sequence \(\{y_{n}\}\) in \(f(X)\) as

\[
y_{n}=f(x_{n+1})=(1-\alpha_{n})f(x_{n})+\alpha_{n}Tx_{n},\ x_{0}\in X,\ n\geq 0, \tag{1.11.1}
\]
where \(0\leq\alpha_{n}\leq 1\) for each \(n\). The sequence \(\{y_{n}\}\) thus obtained is called modified Mann iterative scheme.

\textbf{Definition 1.12.} 
Let \(X\) be a Banach space. Assume that \(T(X) \subseteq f(X)\) and that \(f(X)\) is a convex subset of \(X\). 
For \(x_{0} \in X\), define two sequences \(\{y_{n}\}\) and \(\{z_{n}\}\) in \(f(X)\) by
\begin{align}
z_{n} &= f(x_{n+1}) = (1 - \alpha_{n}) f(x_{n}) + \alpha_{n} T(v_{n}), \nonumber\\
y_{n} &= f(v_{n}) = (1 - \beta_{n}) f(x_{n}) + \beta_{n} T(x_{n}), \quad n \ge 0, \tag{1.12.1}
\end{align}
where \(0 \le \alpha_{n}, \beta_{n} \le 1\) for each \(n\). 
The sequence \(\{z_{n}\}\) thus obtained is called a \emph{modified Ishikawa iterative scheme}.

We define modified Mann and modified Ishikawa iteration schemes with respect to \(T,f\) and \(g\) as follows.

\textbf{Definition 1.13.} 
Let \(X\) be a Banach space. Suppose that 
\(T(X) \subseteq f(X)\) and \(T(X) \subseteq g(X)\). 
Assume that \(f(X)\) and \(g(X)\) are convex subsets of \(X\). 
For \(x_{0} \in X\), define a sequence \(\{y_{n}\}\) by
\begin{align}
y_{2n}   &= f(x_{2n+1})   = (1 - \alpha_{2n})f(x_{2n})   + \alpha_{2n}T(x_{2n}), \nonumber\\
y_{2n+1} &= g(x_{2n+2})   = (1 - \alpha_{2n+1})g(x_{2n+1}) + \alpha_{2n+1}T(x_{2n+1}), \tag{1.13.1}
\end{align}
for \(n \ge 0\), where \(0 \le \alpha_{n} \le 1\) for each \(n\).  
The sequence \(\{y_{n}\}\) thus obtained is called a \emph{modified Mann iterative scheme} with respect to \(T\), \(f\), and \(g\).

Here we observe that if \(g=f\) in (1.13.1) then we obtain (1.11.1).

\textbf{Definition 1.14.} 
Let \(X\) be a Banach space. Suppose that 
\(T(X) \subseteq f(X)\) and \(T(X) \subseteq g(X)\). 
Assume that \(f(X)\) and \(g(X)\) are convex subsets of \(X\). 
For \(x_{0} \in X\), define two sequences \(\{z_{n}\}\) and \(\{y_{n}\}\) by
\begin{align}
z_{2n}   &= f(x_{2n+1})   = (1 - \alpha_{2n})f(x_{2n})   + \alpha_{2n}T(v_{2n}), \nonumber\\
z_{2n+1} &= g(x_{2n+2})   = (1 - \alpha_{2n+1})g(x_{2n+1}) + \alpha_{2n+1}T(v_{2n+1}), \nonumber\\
y_{2n}   &= f(v_{2n})     = (1 - \beta_{2n})f(x_{2n})     + \beta_{2n}T(x_{2n}), \nonumber\\
y_{2n+1} &= g(v_{2n+1})   = (1 - \beta_{2n+1})g(x_{2n+1}) + \beta_{2n+1}T(x_{2n+1}), \tag{1.14.1}
\end{align}
for \(n \ge 0\), where \(0 \le \alpha_{n}, \beta_{n} \le 1\) for each \(n\).  
The sequence \(\{z_{n}\}\) thus defined is called a \emph{modified Ishikawa iterative scheme} with respect to \(T\), \(f\), and \(g\).\\
Note that the iterations (1.14.1) implies (1.12.1) if \(g=f\).

In section 2, we prove the existence of common fixed points for three selfmaps $T,f$ and $g$ defined on a metric space $(X,d)$ satisfying, $T$ is $(f,g)$-weakly contractive; and the pairs $(T,f)$ and $(T,g)$ are weakly compatible. Also, for such $T,f$ and $g$, we prove the convergence of modified Mann iteration and modified Ishikawa iteration with respect to $T,f$ and $g$ to their common fixed point, in a normed space. In section 3, we obtain invariant approximation results from the set of best approximations to common fixed points of $T,f$ and $g$.

\section{Main Results}

The following theorem is our main result.

\begin{theorem}
Let $(X,d)$ be a metric space and let $T$ be a $(f,g)-$ weakly contractive mapping. If the pairs $(T,f)$ and $(T,g)$ are weakly compatible and $\overline{T(X)}\subseteq f(X)$, $\overline{T(X)}\subseteq g(X)$ and $\overline{T(X)}$ is a complete subspace of $X$, then $T$, $f$ and $g$ have a unique common fixed point in $X$.
\end{theorem}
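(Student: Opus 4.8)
The plan is to adapt the classical Jungck construction to the alternating setting dictated by Definition~1.8. Starting from an arbitrary $x_0\in X$, use $\overline{T(X)}\subseteq f(X)$ and $\overline{T(X)}\subseteq g(X)$ to choose points $x_n$ with $fx_{2n+1}=Tx_{2n}$ and $gx_{2n+2}=Tx_{2n+1}$, and set $y_n=Tx_n$, so that $y_{2n}=fx_{2n+1}$ and $y_{2n-1}=gx_{2n}$. The first step is to show the consecutive distances $d(y_n,y_{n+1})$ are nonincreasing. Applying the $(f,g)$-weakly contractive inequality to consecutive pairs $(x_{2n},x_{2n+1})$ and $(x_{2n+1},x_{2n+2})$, and retaining in each case whichever of the two arguments of the $\min$ is expressible through already-defined values (namely $d(gx_{2n},fx_{2n+1})=d(y_{2n-1},y_{2n})$ and $d(fx_{2n+1},gx_{2n+2})=d(y_{2n},y_{2n+1})$), one obtains $d(y_{n+1},y_{n+2})\le d(y_n,y_{n+1})-\psi(d(y_n,y_{n+1}))$ for all $n$. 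Hence $\{d(y_n,y_{n+1})\}$ decreases to some $\ell\ge 0$, and continuity of $\psi$ forces $\psi(\ell)\le 0$, so $\ell=0$.

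The main obstacle is the Cauchy argument: for weakly contractive maps this is the delicate point, and here it is further complicated by parity bookkeeping. I would argue by contradiction. If $\{y_n\}$ is not Cauchy, fix $\varepsilon>0$ and, for each $k$, take $n_k$ and the least $m_k>n_k$ with $d(y_{m_k},y_{n_k})\ge\varepsilon$; minimality together with $d(y_n,y_{n+1})\to 0$ gives $d(y_{m_k},y_{n_k})\to\varepsilon$ and also $m_k\ge n_k+2$ eventually. By passing to subsequences and shifting each of $m_k,n_k$ by at most one index — which perturbs all relevant distances by $o(1)$ — I may assume $m_k$ even and $n_k$ odd; then the $(f,g)$-weakly contractive inequality applied to $(x_{m_k},x_{n_k})$, using the term $d(gx_{m_k},fx_{n_k})=d(y_{m_k-1},y_{n_k-1})$, yields $d(y_{m_k},y_{n_k})\le d(y_{m_k-1},y_{n_k-1})-\psi(d(y_{m_k-1},y_{n_k-1}))$. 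Since both $d(y_{m_k},y_{n_k})$ and $d(y_{m_k-1},y_{n_k-1})$ tend to $\varepsilon$, continuity of $\psi$ gives $\psi(\varepsilon)\le 0$, a contradiction. Thus $\{y_n\}$ is Cauchy, and since it lies in $\overline{T(X)}$, which is complete, $y_n\to z$ for some $z\in\overline{T(X)}$.

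Finally, from $\overline{T(X)}\subseteq f(X)\cap g(X)$ pick $p,q\in X$ with $fp=z=gq$. Applying the inequality to $(x_{2n+1},q)$ and using $fx_{2n+1}=y_{2n}\to z=gq$ (so the first argument of the $\min$ tends to $0$) gives $d(Tx_{2n+1},Tq)\to 0$, hence $Tq=z=gq$ since $Tx_{2n+1}=y_{2n+1}\to z$; symmetrically, applying it to $(x_{2n},p)$ and using $gx_{2n}=y_{2n-1}\to z=fp$ gives $Tp=z=fp$. So $p$ is a coincidence point of $(T,f)$ and $q$ of $(T,g)$, and weak compatibility yields $Tz=fz$ and $Tz=gz$; write $w:=Tz=fz=gz$. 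Plugging $(z,p)$ into the inequality and using $gz=w$, $fp=z$, $Tz=w$, $Tp=z$ gives $d(w,z)\le d(w,z)-\psi(d(w,z))$, so $w=z$ and $z$ is a common fixed point of $T,f,g$. Uniqueness is identical: if $z'$ is another common fixed point, applying the inequality to $(z,z')$ forces $\psi(d(z,z'))\le 0$, hence $z=z'$.
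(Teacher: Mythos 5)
Your proposal is correct and follows essentially the same route as the paper's proof: the alternating Jungck-type sequence $y_{2n}=Tx_{2n}=fx_{2n+1}$, $y_{2n+1}=Tx_{2n+1}=gx_{2n+2}$, the inequality $d(y_{n+1},y_{n+2})\le d(y_n,y_{n+1})-\psi(d(y_n,y_{n+1}))$ forcing the consecutive distances to $0$, a negation-of-Cauchy argument ending in $\psi(\varepsilon)\le 0$, and then coincidence points via $\overline{T(X)}\subseteq f(X)\cap g(X)$, weak compatibility, and the same uniqueness computation. The only differences are cosmetic bookkeeping (your parity-shifting of the indices $m_k,n_k$ versus the paper's even-indexed subsequence with $d(y_{2m_k},y_{2n_k-2})<\varepsilon$, and your omission of the paper's separate ``eventually constant sequence'' case, which your non-strict monotonicity argument makes unnecessary).
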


\begin{proof}
Let $x_{0}\in X.$ Since $\overline{T(X)}\subseteq f(X)$, $T(X)\subseteq f(X).$ So there exists $x_{1}\in X$ such that $Tx_{0}=fx_{1}.$
Since $\overline{T(X)}\subseteq g(X)$, $T(X)\subseteq g(X).$ So there exists $x_{2}\in X$ such that $Tx_{1}=gx_{2}.$
On continuing this process, inductively we get a sequence $\{x_{n}\}$ in $X$ such that $y_{2n}=T(x_{2n})=f(x_{2n+1})$ and $y_{2n+1}=T(x_{2n+1})=g(x_{2n+2})$, $n=0,1,2,\ldots.$
We now show that the sequence $\{d(y_{n},y_{n+1})\}$ is a decreasing sequence. Now,
\begin{align*}
d(y_{2n},y_{2n+1}) &= d(T(x_{2n}),T(x_{2n+1})) \\
&\leq \min\{d(f(x_{2n}),g(x_{2n+1}))-\psi(d(f(x_{2n}),g(x_{2n+1}))), \\
&\quad d(g(x_{2n}),f(x_{2n+1}))-\psi(d(g(x_{2n}),f(x_{2n+1})))) \\
&\leq d(g(x_{2n}),f(x_{2n+1}))-\psi(d(g(x_{2n}),f(x_{2n+1}))) \\
&= d(y_{2n-1},y_{2n})-\psi(d(y_{2n-1},y_{2n})) \\
&< d(y_{2n-1},y_{2n}).
\end{align*}
Therefore
\begin{equation}
d(y_{2n},y_{2n+1})<d(y_{2n-1},y_{2n}).
\end{equation}
%(2.1.1)
Similarly, it is easy to see that
\begin{equation}
d(y_{2n+1},y_{2n+2})<d(y_{2n},y_{2n+1}).
\end{equation}
%(2.1.2)
Thus from (2.1.1) and (2.1.2) it follows that

\[d(y_{n},y_{n+1})<d(y_{n-1},y_{n}),\ n=0,1,2,\ldots.\]
Therefore $\{d(y_{n},y_{n+1})\}$ is a strictly decreasing sequence of nonnegative reals and hence convergent and let its limit be $l\geq 0$.
If $l>0$ then $l=\inf\limits_{n}d(y_{n},y_{n+1})$.
Thus we have $l\leq d(y_{n},y_{n+1})$ for all $n$.\\
Now,
\begin{align}
d(y_{n},y_{n+1}) &= d(T(x_{n}),T(x_{n+1})) \nonumber \\
&\leq \min\{d(f(x_{n}),g(x_{n+1}))-\psi(d(f(x_{n}),g(x_{n+1}))), \nonumber \\
&\quad d(g(x_{n}),f(x_{n+1}))-\psi(d(g(x_{n}),f(x_{n+1})))\}.
\end{align}
%(2.1.3)
If $n$ is odd then from (2.1.3), we get
\begin{align}
d(y_{n},y_{n+1}) &= d(T(x_{n}),T(x_{n+1})) \nonumber \\
&\leq d(f(x_{n}),g(x_{n+1}))-\psi(d(f(x_{n}),g(x_{n+1}))) \nonumber \\
&= d(y_{n-1},y_{n})-\psi(d(y_{n-1},y_{n})).\nonumber
\end{align}
%(2.1.4)
Therefore
\begin{equation}
    d(y_{n},y_{n+1})\leq d(y_{n-1},y_{n})-\psi(d(y_{n-1},y_{n})).
\end{equation}
If $n$ is even then from (2.1.3), we get
\begin{align}
d(y_{n},y_{n+1}) &= d(T(x_{n}),T(x_{n+1})) \nonumber \\
&\leq d(g(x_{n}),f(x_{n+1}))-\psi(d(g(x_{n}),f(x_{n+1}))) \nonumber \\
&= d(y_{n-1},y_{n})-\psi(d(y_{n-1},y_{n})).\nonumber
\end{align}
%(2.1.5)
Therefore
\begin{equation}
    d(y_{n},y_{n+1})\leq d(y_{n-1},y_{n})-\psi(d(y_{n-1},y_{n})).
\end{equation}
We have, from (2.1.4) and (2.1.5)
\begin{equation}
d(y_{n},y_{n+1})\leq d(y_{n-1},y_{n})-\psi(d(y_{n-1},y_{n}))\ \ \text{for all}\ \ n.
\end{equation}
%(2.1.6)
On taking limit as $n\to\infty$ in (2.1.6), we get
\[
\lim\limits_{n\to\infty}d(y_{n},y_{n+1})\leq\lim\limits_{n\to\infty}d(y_{n-1},y_{n})-\psi(\lim\limits_{n\to\infty}d(y_{n-1},y_{n})).
\]
Therefore $l\leq l-\psi(l)<l$, a contradiction.
Hence, $l=0$.\\
Thus it follows that
\[
\lim_{n\to\infty}d(y_n, y_{n+1}) = 0.
\]
\textbf{Case(i):}  $y_n = y_{n+1}$ for some $n$.\\
If $n$ is even, we write $n = 2m$ (say), $m \in Z_+$.
Thus, from (1.8.1) it follows that
\[
y_{2m} = y_{2m+k} \quad \text{for all } k = 0, 1, 2, \ldots.
\]
If $n$ is odd then $n = 2m + 1$ (say), $m \in Z_+$.
From (1.8.1), we get
\[
y_{2m+1} = y_{2m+1+k} \quad \text{for all } k = 0, 1, 2, \ldots.
\]
Therefore
\[
y_n = y_{n+k} \quad \text{for all } k = 0, 1, 2, \ldots.
\]
Thus it follows that $\{y_m\}_{m\geq n}$ is a constant sequence and hence it is a Cauchy sequence.\\
\textbf{Case(ii):}  $y_n \neq y_{n+1}$ for all $n$.\\
We now show that $\{y_n\}$ is a Cauchy sequence in $T(X)$.
It is sufficient to show that the sequence $\{y_{2n}\}$ is a Cauchy sequence.
Suppose $\{y_n\}$ is not Cauchy. Then there exists $\epsilon > 0$ for all $k \in \mathbb{N}$ there exist $\{m_k\}$, $\{n_k\}$ with $n_k > m_k > k$ such that
\begin{equation}
d(y_{2m_k}, y_{2n_k}) \geq \epsilon \quad \text{and} \quad d(y_{2m_k}, y_{2n_k-2}) < \epsilon.
\end{equation}
%(2.1.7)
Now,
\begin{equation}
d(y_{2n_k}, y_{2m_k}) \leq d(y_{2n_k}, y_{2n_k-1}) + d(y_{2n_k-1}, y_{2n_k-2}) + d(y_{2n_k-2}, y_{2m_k}).
\end{equation}
%(2.1.8)
On taking limits as $k \to \infty$ in (2.1.8) and using (2.1.7), we get
\[
\epsilon \leq \lim_{k\to\infty}d(y_{2n_k}, y_{2m_k}) + \lim_{k\to\infty}d(y_{2n_k-1}, y_{2n_k-2}) + \lim_{k\to\infty}d(y_{2n_k-2}, y_{2m_k}) \leq \epsilon.
\]
Therefore
\begin{equation}
\lim_{k\to\infty}d(y_{2n_k}, y_{2m_k}) = \epsilon.
\end{equation}
%(2.1.9)
We now show that $\lim\limits_{k\to\infty}d(y_{2m_k-1}, y_{2n_k}) = \epsilon$.
Now,
\begin{equation}
d(y_{2m_k-1}, y_{2n_k}) \leq d(y_{2m_k-1}, y_{2n_k-1}) + d(y_{2n_k-1}, y_{2n_k}).
\end{equation}
%(2.1.10)
On taking limits as $k\to\infty$ in (2.1.10), we get
\begin{align*}
\lim_{k\to\infty}d(y_{2m_{k}-1},y_{2n_{k}}) &\leq \lim_{k\to\infty}d(y_{2m_{k}-1},y_{2n_{k}-1})+\lim_{k\to\infty}d(y_{2n_{k}-1},y_{2n_{k}}) \\
&\leq \epsilon.
\end{align*}
Therefore
\begin{equation}
\lim_{k\to\infty}d(y_{2m_{k}-1},y_{2n_{k}}) \leq \epsilon.
\end{equation}
%(2.1.11)
Now,
\begin{align}
\epsilon &\leq d(y_{2m_{k}-1},y_{2n_{k}-1}) \nonumber\\
&\leq d(y_{2m_{k}-1},y_{2n_{k}})+d(y_{2n_{k}},y_{2n_{k}-1})
\end{align}
%(2.1.12)
On taking limits as $k\to\infty$ in (2.1.12), we get
\begin{equation}
\epsilon\leq\lim_{k\to\infty}d(y_{2m_{k}-1},y_{2n_{k}}).
\end{equation}
%(2.1.13)
We have, from (2.1.11) and (2.1.13)
\begin{equation}
\lim_{k\to\infty}d(y_{2m_{k}-1},y_{2n_{k}})=\epsilon.
\end{equation}
%(2.1.14)
Thus by using (2.1.9) and (2.1.14), we get
\begin{align*}
\epsilon &= d(y_{2m_{k}},y_{2n_{k}}) \\
&\leq d(y_{2m_{k}},y_{2n_{k}+1})+d(y_{2n_{k}+1},y_{2n_{k}}) \\
&= d(T(x_{2m_{k}}),T(x_{2n_{k}+1}))+d(y_{2n_{k}+1},y_{2n_{k}}) \\
&\leq \min\{d(f(x_{2m_{k}}),g(x_{2n_{k}+1}))-\psi(d(f(x_{2m_{k}}),g(x_{2n_{k}+1}))), \\
&\quad d(g(x_{2m_{k}}),f(x_{2n_{k}+1}))-\psi(d(g(x_{2m_{k}}),f(x_{2n_{k}+1})))\} \\
&\quad +d(y_{2n_{k}+1},y_{2n_{k}}) \\
&\leq d(g(x_{2m_{k}}),f(x_{2n_{k}+1}))-\psi(d(g(x_{2m_{k}}),f(x_{2n_{k}+1}))) \\
&\quad +d(y_{2n_{k}+1},y_{2n_{k}}) \\
&= d(y_{2m_{k}-1},y_{2n_{k}})-\psi(d(y_{2m_{k}-1},y_{2n_{k}})))+d(y_{2n_{k}+1},y_{2n_{k}}).
\end{align*}
Therefore
\begin{equation}
\epsilon \leq d(y_{2m_{k}-1},y_{2n_{k}})-\psi(d(y_{2m_{k}-1},y_{2n_{k}}))+d(y_{2n_{k}+1},y_{2n_{k}}).
\end{equation}
%(2.1.15)
On taking limits as $k\to\infty$ in (2.1.15), we get
\begin{align*}
\epsilon &\leq \lim_{k\to\infty}d(y_{2m_{k}-1},y_{2n_{k}})-\lim_{k\to\infty}\psi(d(y_{2m_{k}-1},y_{2n_{k}}))+\lim_{k\to\infty}d(y_{2n_{k}+1},y_{2n_{k}}) \\
&= \epsilon-\psi(\epsilon)<\epsilon,
\end{align*}
a contradiction.
Therefore $\{y_{2n}\}$ is a Cauchy sequence in $T(X)$ and hence $\{y_{n}\}$ is Cauchy in $T(X)$.\\
Since $T(X)\subseteq\overline{T(X)}$, we have $\{y_{n}\}$ is Cauchy in $\overline{T(X)}$ and $\overline{T(X)}$ is complete there exists $z\in T(X)$ such that $y_{n}\to z$ as $n\to\infty$.\\
Now,
\[
\lim_{n\to\infty}y_{2n}=\lim_{n\to\infty}T(x_{2n})=\lim_{n\to\infty}f(x_{2n+1})=z
\]
and
\[
\lim_{n\to\infty}y_{2n+1}=\lim_{n\to\infty}T(x_{2n+1})=\lim_{n\to\infty}g(x_{2n+2})=z.
\]
Since $\overline{T(X)}\subseteq f(X)$ and $z\in\overline{T(X)}$, there exists $u\in X$ such that $z=f(u)$ and since $T(X)\subseteq g(X)$ and $z\in T(X)$, there exists $v\in X$ such that $z=g(v)$.\\
We show now that $z=T(u)$ and $z=T(v)$.\\
Now,
\begin{align}
d(Tu,T(x_{2n+2})) &\leq \min\{d(fu,g(x_{2n+2}))-\psi(d(fu,g(x_{2n+2}))), \nonumber \\
&\quad d(gu,f(x_{2n+2}))-\psi(d(gu,f(x_{2n+2})))\} \nonumber \\
&\leq d(fu,g(x_{2n+2}))-\psi(d(fu,g(x_{2n+2}))).
\end{align}
%(2.1.16)
On taking limits as $n\to\infty$ in (2.1.16), we get
\[
d(Tu,z) \leq d(z,z)-\psi(d(z,z)) = 0.
\]
Thus it follows that $Tu=z$.\\
Now,
\begin{align}
d(Tv,T(x_{2n+1})) &\leq \min\{d(fv,g(x_{2n+1}))-\psi(d(fv,g(x_{2n+1}))), \nonumber \\
&\quad d(gv,f(x_{2n+1}))-\psi(d(gv,f(x_{2n+1})))\} \nonumber \\
&\leq d(gv,f(x_{2n+1}))-\psi(d(gv,f(x_{2n+1}))).
\end{align}
%(2.1.17)
On taking limits as $n\to\infty$ in (2.1.17), we get
\[
d(Tv,z) \leq d(z,z)-\psi(d(z,z)) = 0.
\]
Thus it follows that $Tv=z$.\\
Since $(T,f)$ and $(T,g)$ are weakly compatible and since $Tu=fu$ and $Tv=gv$ we have
\[
fz=f(Tu)=T(fu)=Tz=T(gv)=g(Tv)=gz.
\]
This shows that $z$ is a coincidence point of $T,f$ and $g$.\\
We now show that $Tz=z$.\\
Consider
\begin{align*}
d(Tz,z) &= d(Tz,Tu) \\
&\leq \min\{d(fz,gu)-\psi(d(fz,gu)),d(gz,fu)-\psi(d(gz,fu))\} \\
&\leq d(gz,fu)-\psi(d(gz,fu)) \\
&= d(Tz,z)-\psi(d(Tz,z))
\end{align*}
Therefore $d(Tz,z)\leq d(Tz,z)-\psi(d(Tz,z))$, which implies that $\psi(d(Tz,z))\leq 0$. This shows that $d(Tz,z)=0$ and hence $Tz=z$.\\
Therefore $z$ is a common fixed point of $T,f$ and $g$.

\subsection*{Uniqueness of $z$:}

Let $t$ be another common fixed point of $T,f$ and $g$ with $t\neq z$.
Then
\begin{align*}
d(t,z) &= d(Tt,Tz) \\
&\leq \min\{d(ft,gz)-\psi(d(ft,gz)),d(gt,fz)-\psi(d(gt,fz))\} \\
&\leq d(ft,gz)-\psi(d(ft,gz)) \\
&= d(t,z)-\psi(d(t,z)).
\end{align*}
Thus it follows that $\psi(d(t,z))\leq 0$ so that $d(t,z)=0$ and hence $t=z$.\\
Therefore $z$ is the unique common fixed point of $T,f$ and $g$.\\
Hence the theorem follows.
\end{proof}

\begin{corollary}
Let $\{T_{n}\}_{n=1}^{\infty},f$ and $g$ be selfmaps on a metric space $(X,d)$.
Assume that $\overline{T_{1}(X)}\subseteq f(X),\overline{T_{1}(X)}\subseteq g(X)$ and $\overline{T_{1}(X)}$ is complete and
\begin{equation}
d(T_{1}x,T_{j}y)\leq\min\{d(fx,gy)-\psi(d(fx,gy)),d(gx,fy)-\psi(d(gx,fy))\}
\end{equation}
%(2.2.1)
for all $x,y\in X$ and $j=1,2,3,\ldots$. If the pairs $(T_{1},f)$ and $(T_{1},g)$ are weakly compatible, then $\{T_{n}\}_{n=1}^{\infty}$, $f$ and $g$ have a unique common fixed point in $X$.
\end{corollary}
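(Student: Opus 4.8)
The plan is to reduce everything to Theorem~2.1. First I would set $j=1$ in the contractive inequality (2.2.1): this yields
\[
d(T_{1}x,T_{1}y)\le\min\{d(fx,gy)-\psi(d(fx,gy)),\ d(gx,fy)-\psi(d(gx,fy))\}
\]
for all $x,y\in X$, which says exactly that $T_{1}$ is an $(f,g)$-weakly contractive map in the sense of Definition~1.8, with the same $\psi\in\Psi$. By hypothesis $\overline{T_{1}(X)}\subseteq f(X)$, $\overline{T_{1}(X)}\subseteq g(X)$, $\overline{T_{1}(X)}$ is a complete subspace of $X$, and the pairs $(T_{1},f)$ and $(T_{1},g)$ are weakly compatible; hence Theorem~2.1 applies to $T_{1},f,g$ and produces a unique $z\in X$ with $T_{1}z=fz=gz=z$.

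Next I would promote $z$ to a common fixed point of the whole family. Fix an index $j\ge 2$ and apply (2.2.1) with $x=y=z$. Since $fz=gz=z$ and $\psi(0)=0$, the right-hand side equals $\min\{d(z,z)-\psi(d(z,z)),\ d(z,z)-\psi(d(z,z))\}=0$, so $d(T_{1}z,T_{j}z)=0$, i.e.\ $T_{j}z=T_{1}z=z$. As $j\ge 2$ was arbitrary, $T_{n}z=z$ for every $n\ge 1$, and combined with $fz=gz=z$ this shows that $z$ is a common fixed point of $\{T_{n}\}_{n=1}^{\infty}$, $f$ and $g$.

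For uniqueness, let $w$ be any common fixed point of $\{T_{n}\}_{n=1}^{\infty}$, $f$ and $g$. Then in particular $T_{1}w=fw=gw=w$, so $w$ is a common fixed point of $T_{1},f,g$, and the uniqueness clause of Theorem~2.1 forces $w=z$. Hence $z$ is the unique common fixed point of $\{T_{n}\}_{n=1}^{\infty}$, $f$ and $g$.

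I do not expect a genuine obstacle: all the analytic work is carried inside Theorem~2.1, and the only genuinely new step---that a single evaluation of (2.2.1) at $x=y=z$ pins down $T_{j}z$---is immediate from $\psi(0)=0$ together with $d(z,z)=0$. The one point worth stating with care is that the index range of (2.2.1) includes $j=1$, so that the hypotheses of Theorem~2.1 are met verbatim rather than merely approximated.
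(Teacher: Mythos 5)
Your proposal is correct and follows essentially the same route as the paper: specialize (2.2.1) to $j=1$ to invoke Theorem~2.1, then evaluate (2.2.1) at $x=y=z$ for $j\ge 2$ to get $T_{j}z=z$, with uniqueness inherited from Theorem~2.1. The only difference is that you spell out the uniqueness step slightly more carefully than the paper does.
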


\begin{proof}
By taking $j=1$ in (2.2.1) then, we have
\[
d(T_{1}x,T_{1}y)\leq\min\{d(fx,gy)-\psi(d(fx,gy)),d(gx,fy)-\psi(d(gx,fy))\}
\]
so that $T_{1}$ is $(f,g)-$weakly contractive map.
Thus by Theorem 2.1, $T_{1}$, $f$ and $g$ have a unique common fixed point in $X$
say $z$.\\
Now, let $j\in N$ with $j\neq 1$ then, from (4.1.1) we have
\begin{align*}
d(T_{1}z,T_{j}z) &\leq \min\{d(fz,gz)-\psi(d(fz,gz)),d(gz,fz)-\psi(d(gz,fz))\} \\
&\leq d(z,z)-\psi(d(z,z)) = 0.
\end{align*}
Therefore
\[
d(z,T_{j}z)\leq 0\;\text{ and hence }T_{j}z=z.
\]
Thus $z$ is a unique common fixed point of the sequence $\{T_{n}\}_{n=1}^{\infty},\;f\text{ and }g.$\\
Hence the corollary follows.
\end{proof}
The following is an example in support of Theorem 2.1.

\begin{example}
Let $X=(\frac{1}{4},1]$ with the usual metric. Define $T$, $f$ and $g$ on $X$ by

\[
T(x)=
\begin{cases}
\frac{1}{2}, & \text{if } \frac{1}{4}<x<\frac{2}{3},\\
1-\frac{1}{2}x, & \text{if } \frac{2}{3}\le x\le 1,
\end{cases}
\qquad
f(x)=
\begin{cases}
1, & \text{if } \frac{1}{4}<x<\frac{2}{3},\\
\frac{4}{3}-x, & \text{if } \frac{2}{3}\le x\le 1,
\end{cases}
\quad \text{and} \quad
g(x)=
\begin{cases}
\frac{1}{3}, & \text{if } \frac{1}{4}<x<\frac{2}{3},\\
\frac{4}{3}-x, & \text{if } \frac{2}{3}\le x\le 1.
\end{cases}
\]
Then $\overline{T(X)}\subseteq f(X),\;T(X)\subseteq g(X),\;(T,f)$ and $(T,g)$ are weakly compatible and $\frac{2}{3}$ is the unique common fixed point of $T,f$ and $g$.\\
Define $\psi:R_{+}\to R_{+}$ by $\psi(t)=\frac{t^{2}}{1+t},\;t\in R_{+}.$ Then $\psi$ is monotonically increasing and continuous and $\lim\limits_{t\to\infty}\psi(t)=\infty$ with $\psi(0)=0$ so that $\psi\in\Psi$. We observe that $T$ is a $(f,g)-$weakly contractive mapping on $X$.\\
Thus $T,f$ and $g$ satisfy all the conditions of Theorem 2.1 and $T,f$ and $g$ have a unique common fixed point $\frac{2}{3}$.
\end{example}

\begin{remark}
If we replace condition (1.8.1) of Theorem 2.1 by
\begin{equation}
d(Tx,Ty)\leq\max\{d(fx,gy)-\psi(d(fx,gy)),d(gx,fy)-\psi(d(gx,fy))\},
\end{equation}
%(2.4.1)
then $T,f$ and $g$ may not have common fixed points in $X$.
\end{remark}

\begin{example}
Let $X=(\frac{1}{4},1]$ with usual metric. Define $T,f$ and $g$ by
\[
T(x)=
\begin{cases}
\frac{1}{2}, & \text{if } \frac{1}{4}<x\le \frac{2}{3},\\
1-\frac{1}{2}x, & \text{if } \frac{2}{3}<x\le 1,
\end{cases}
\qquad
f(x)=
\begin{cases}
1, & \text{if } \frac{1}{3}<x\le \frac{2}{3},\\
\frac{4}{3}-x, & \text{if } \frac{2}{3}<x\le 1,
\end{cases}
\quad \text{and} \quad
g(x)=
\begin{cases}
\frac{1}{3}, & \text{if } \frac{1}{3}<x<\frac{2}{3},\\
\frac{4}{3}-x, & \text{if } \frac{2}{3}\le x\le 1.
\end{cases}
\]
Then $\overline{T(X)}\subseteq f(X),\;\overline{T(X)}\subseteq g(X)$ and $T,f$ and $g$ have no common fixed point in $X$.\\
Define $\psi:R_{+}\to R_{+}$ by $\psi(t)=\frac{t}{2},\;t\in R_{+}$.\\
\textbf{Case(i):} $x,y\in(\frac{1}{4},\frac{2}{3})$.

$d(Tx,Ty)=0$, so that $T,f$ and $g$ satisfy (2.4.1).\\
\textbf{Case(ii):} $x,y\in[\frac{2}{3},1]$.

$d(Tx,Ty)=\frac{1}{2}|y-x|$.

$d(fx,gy)-\psi(d(fx,gy))=\frac{1}{2}|y-x|$.

$d(gx,fy)-\psi(d(gx,fy))=\frac{1}{2}|y-x|$.

Thus $T,f$ and $g$ satisfy (2.4.1).\\
\textbf{Case(iii):} $x\in(\frac{1}{4},\frac{2}{3}),\;y\in(\frac{2}{3},1]$.

Now,

$d(Tx,Ty)=\frac{1}{2}(1-y)$.

Consider

$d(fx,gy)-\psi(d(fx,gy))=\frac{1}{2}(y-\frac{1}{3})$ and

$d(gx,fy)-\psi(d(gx,fy))=\frac{1}{2}(1-y)$.

We observe that $\frac{1}{4}(1-y)\leq\frac{1}{2}(y-\frac{1}{3})$, so that $T,f$ and $g$ satisfy (2.4.1).\\
\textbf{Case(iv):} $x\in(\frac{1^{2}}{4},\frac{2}{3}),\;y=\frac{2}{3}$.

$d(Tx,Ty)=0$.

Hence (2.4.1) holds.\\
\textbf{Case(v):} $x\in(\frac{2}{3},1],\;y=\frac{2}{3}$.

We now consider $d(Tx,Ty)=\frac{1}{2}(1-x)$.

$d(fx,gy)-\psi(d(fx,gy))=\frac{1}{2}(x-\frac{2}{3})$.

$d(gx,fy)-\psi(d(gx,fy))=\frac{1}{2}(x-\frac{1}{3})$.\\
Since\\
$\frac{1}{2}(1-x)\leq\max\{\frac{1}{2}(x-\frac{1}{3}),\frac{1}{2}(x-\frac{2}{3})\}=\frac{1}{2}(x-\frac{1}{3})$, for $x\in(\frac{2}{3},1]$, clearly (2.4.1) holds in this case.

Therefore the inequality (2.4.1) holds along with the other hypotheses of Theorem 2.1, and $T,f$ and $g$ have no common fixed points.\\
Here we observe that $T$ is not $(f,g)-$weakly contractive map, for take $x=\frac{3}{4},\;y=\frac{2}{3}$.\\
Then
\begin{align*}
d(Tx,Ty) &= \frac{1}{8} \not\leq \min\left\{d(fx,gy)-\psi(d(fx,gy))=\frac{1}{24},\right. \\
&\left.\quad d(gx,fy)-\psi(d(gx,fy))=\frac{5}{24}\right\}=\frac{1}{24},
\end{align*}
so that the inequality (1.8.1) does not hold.
\end{example}

This example shows that the importance of ``minimum'' in the inequality (1.8.1) of the condition `$T$ is $(f,g)-$weakly contractive mapping on $X$'.

\begin{theorem}
Let $(X,d)$ be a normed space and let $T$ be a $(f,g)-$ weakly contractive mapping. If the pairs $(T,f)$ and $(T,g)$ are weakly compatible and

$\overline{T(X)}\subseteq f(X)$, $\overline{T(X)}\subseteq g(X)$ and $f(X)$ and $g(X)$ are convex $\overline{T(X)}$ is a complete subspace of $X$, then for $x_{0}\in X$, the modified Mann iterative scheme defined by (1.13.1) with $\sum\alpha_{n}=\infty$, converges to a common fixed point of $T,f$ and $g$.
\end{theorem}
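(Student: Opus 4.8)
The plan is to take the unique common fixed point produced by Theorem~2.1 as the target, and then reduce the convergence of $\{y_n\}$ to a one–dimensional recursion controlled by $\psi$ and by $\sum\alpha_n=\infty$.

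First, since a normed space is in particular a metric space and the hypotheses here include all those of Theorem~2.1 (the extra convexity of $f(X),g(X)$ is not needed there), $T,f,g$ have a unique common fixed point $p$, with $Tp=fp=gp=p$. Before estimating, I would record that the scheme (1.13.1) is well posed: at stage $n$ the element combined with $T(x_n)$ is the iterate generated at the previous stage, $T(x_n)\in\overline{T(X)}\subseteq f(X)\cap g(X)$, and $f(X),g(X)$ are convex, so each displayed convex combination lies in the required image and admits a preimage $x_{n+1}$; by construction $f(x_{2n+1})=y_{2n}$ and $g(x_{2n+2})=y_{2n+1}$.

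Now set $a_n:=\|y_n-p\|$. Writing the $n$-th line of (1.13.1) as $y_n=(1-\alpha_n)w_n+\alpha_n T(x_n)$, convexity of the norm together with $Tp=p$ gives $a_n\le(1-\alpha_n)\|w_n-p\|+\alpha_n\|T(x_n)-Tp\|$, and the scheme is arranged so that $\|w_n-p\|=a_{n-1}$. For the last term, apply (1.8.1) to the pair $(x_n,p)$; using $fp=gp=p$ the minimum on the right is $\min\{\|fx_n-p\|-\psi(\|fx_n-p\|),\ \|gx_n-p\|-\psi(\|gx_n-p\|)\}$, and since $gx_{2m}=y_{2m-1}$ while $fx_{2m+1}=y_{2m}$, for every $n$ the appropriate branch equals $a_{n-1}-\psi(a_{n-1})$; hence $\|T(x_n)-Tp\|\le a_{n-1}-\psi(a_{n-1})$. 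Combining,
\[
a_n\le a_{n-1}-\alpha_n\psi(a_{n-1})\le a_{n-1}\qquad (n\ge 1),
\]
so $\{a_n\}$ decreases to some $a\ge 0$.

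To conclude, sum the inequality: $\sum_{n=1}^{N}\alpha_n\psi(a_{n-1})\le a_0-a_N\le a_0$, hence $\sum_{n\ge 1}\alpha_n\psi(a_{n-1})<\infty$. If $a>0$, then $\psi$ nondecreasing and $a_{n-1}\ge a$ force $\psi(a_{n-1})\ge\psi(a)>0$, so $\psi(a)\sum_{n\ge1}\alpha_n<\infty$, contradicting $\sum\alpha_n=\infty$; therefore $a=0$, i.e.\ $\|y_n-p\|\to 0$, so the modified Mann scheme (1.13.1) converges to the common fixed point $p$ of $T,f,g$. The summation step is routine; I expect the real work to be in setting up the recursion — correctly bookkeeping the $f$/$g$ alternation so that the useful branch of the minimum in (1.8.1) collapses to $a_{n-1}-\psi(a_{n-1})$, and verifying that the convex combinations in (1.13.1) remain in $f(X)$ and $g(X)$, which is exactly where convexity of $f(X),g(X)$ and $\overline{T(X)}\subseteq f(X)\cap g(X)$ enter.
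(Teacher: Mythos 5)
Your proposal follows essentially the same route as the paper's proof: invoke Theorem 2.1 to get the common fixed point $z$, use convexity of the norm together with the $(f,g)$-weakly contractive inequality (with $fz=gz=z$) to obtain $\|y_{n}-z\|\leq \|y_{n-1}-z\|-\alpha_{n}\psi(\|y_{n-1}-z\|)$ via the even/odd alternation, and then sum this against $\sum\alpha_{n}=\infty$ to force the monotone limit of $\{\|y_{n}-z\|\}$ to be $0$. Even your identification of the convex-combination term $w_{n}$ with the previously generated iterate (so that $\|w_{n}-z\|=\|y_{n-1}-z\|$) is exactly the reading of (1.13.1) that the paper itself adopts in its Cases (i) and (ii), so your argument does not depart from the published one in any essential way.
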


\begin{proof}
By Theorem 2.1, we obtain a common fixed point of $T,f$ and $g$, say $z$.\\
Consider the following two cases\\
\item[Case (i):] $n$ is even, we write $n=2m,\ m\in Z_{+}$.\\
Now,
\begin{align*}
||y_{n}-z|| &= ||y_{2m}-z|| \\
&= ||(1-\alpha_{2m})f(x_{2m})+\alpha_{2m}T(x_{2m})-z|| \\
&= ||(1-\alpha_{2m})(f(x_{2m})-z)+\alpha_{2m}(T(x_{2m})-z)|| \\
&\leq (1-\alpha_{2m})||f(x_{2m})-z||+\alpha_{2m}||T(x_{2m})-Tz|| \\
&\leq (1-\alpha_{2m})||f(x_{2m})-z|| \\
&\quad +\alpha_{2m}(\min\{||f(x_{2m})-gz||-\psi(||f(x_{2m})-gz||), \\
&\quad ||g(x_{2m})-fz||-\psi(||g(x_{2m})-fz||)\}) \\
&\leq (1-\alpha_{2m})||f(x_{2m})-z||+\alpha_{2m}(||f(x_{2m})-gz|| \\
&\quad -\psi(||f(x_{2m})-gz||)) \\
&= ||f(x_{2m})-z||-\alpha_{2m}\psi(||f(x_{2m})-z||).
\end{align*}
Therefore
\begin{equation}
||y_{n}-z||\leq||y_{n-1}-z||-\alpha_{n-1}\psi(||y_{n-1}-z||).
\end{equation}
%(2.6.1)
Thus
\begin{equation}
||y_{n}-z||\leq||y_{n-1}-z||.
\end{equation}
%(2.6.2)
\item[Case (ii):] $n$ is odd, we write $n=2m+1,\ m\in Z_{+}$.\\
Now,
\begin{align}
||y_{n}-z|| &= ||y_{2m+1}-z|| \nonumber\\
&= ||(1-\alpha_{2m+1})g(x_{2m+1})+\alpha_{2m+1}T(x_{2m+1})-z|| \nonumber \\
&= ||(1-\alpha_{2m+1})(g(x_{2m+1})-z)+\alpha_{2m+1}(T(x_{2m+1})-z)|| \nonumber \\
&\leq (1-\alpha_{2m+1})||g(x_{2m+1})-z||+\alpha_{2m+1}||T(x_{2m+1})-Tz|| \nonumber \\
&\leq (1-\alpha_{2m+1})||g(x_{2m+1})-z|| \nonumber \\
&\quad +\alpha_{2m+1}(\min\{||f(x_{2m+1})-gz||-\psi(||f(x_{2m+1})-gz||), \nonumber \\
&\quad ||g(x_{2m+1})-fz||-\psi(||g(x_{2m+1})-fz||)\}) \nonumber \\
&\leq (1-\alpha_{2m+1})||g(x_{2m+1})-z||+\alpha_{2m+1}(||g(x_{2m+1})-fz|| \nonumber \\
&\quad -\psi(||g(x_{2m+1})-fz||)) \nonumber \\
&= ||g(x_{2m+1})-z||-\alpha_{2m+1}\psi(||g(x_{2m+1})-z||). \nonumber
\end{align}
%(2.6.4)
Therefore
\begin{equation}
||y_{n}-z||\leq||y_{n-1}-z||-\alpha_{n-1}\psi(||y_{n-1}-z||).
\end{equation}
%(2.6.5)
Thus
\begin{equation}
||y_{n}-z||\leq||y_{n-1}-z||.
\end{equation}
%(2.6.6)
We have, from (2.6.2) and (2.6.4)
\[
||y_{n}-z||\leq||y_{n-1}-z||,\ n=1,2,3,\ldots.
\]
Therefore $\{||y_{n}-z||\}$ is a decreasing sequence of nonnegative reals and hence convergent and let the converging point be $r\geq 0$.\\
If $r>0$ then $r=\inf\limits_{n}||y_{n}-z||$.\\
Thus it follows that
\[
r\leq||y_{n}-z||\ \ \text{for all } n.
\]
From (2.6.1) and (2.6.3), we get
\begin{equation}
||y_{n}-z||\leq||y_{n-1}-z||-\alpha_{n-1}\psi(||y_{n-1}-z||),\ n\geq 0.
\end{equation}
%(2.6.7)
Now,
$\psi(r)\leq\psi(||y_{n}-z||)$, which implies that
\begin{equation}
\alpha_{n}\psi(r)\leq\alpha_{n}\psi(||y_{n}-z||).
\end{equation}
%(2.6.8)
Summing both sides of (2.6.6) from $n=0$ to $m$, we get
\begin{align*}
\sum_{n=0}^{m}\alpha_{n}\psi(r) &\leq \sum_{n=0}^{m}\alpha_{n}\psi(||y_{n}-z||) \\
&\leq \sum_{n=0}^{m}(||y_{n}-z||-||y_{n+1}-z||) \\
&= ||y_{0}-z||-||y_{m+1}-z|| \\
&< ||y_{0}-z||,
\end{align*}
a contradiction.

Therefore $r=0$.

Hence $y_{n}\to z$ as $n\to\infty$.

\end{proof}

As the proof of the following theorem is similar to that of Theorem 2.6, we state the theorem without proof.

\begin{theorem}
Let $T$ be a $(f,g)-$ weakly contractive mapping on a normed space $X$. Assume that $\overline{T(X)}\subseteq f(X)$ and $\overline{T(X)}\subseteq g(X)$. If $f(X)$ and $g(X)$ are convex and $\overline{T(X)}$ is a complete subspace of $X$, and the pairs $(T,f)$ and $(T,g)$ are weakly compatible, then for $x_{0}\in X$, the modified Ishikawa iterative scheme defined by (1.14.1) with $\sum\alpha_{n}\beta_{n}=\infty$, converges to a common fixed point of $T,f$ and $g$.
\end{theorem}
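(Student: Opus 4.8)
The plan is to run the argument of Theorem 2.6 with the single averaging step of the modified Mann scheme replaced by the two nested averaging steps of the modified Ishikawa scheme (1.14.1). First, since $T$ is $(f,g)$-weakly contractive, $\overline{T(X)}\subseteq f(X)$, $\overline{T(X)}\subseteq g(X)$, $\overline{T(X)}$ is complete, and the pairs $(T,f)$ and $(T,g)$ are weakly compatible, Theorem 2.1 applies and produces the unique common fixed point $z$ of $T,f,g$, so that $Tz=fz=gz=z$. Convexity of $f(X)$ and $g(X)$, together with $T(X)\subseteq\overline{T(X)}\subseteq f(X)\cap g(X)$, guarantees that the iterates $z_n,y_n,v_n$ in (1.14.1) are well defined. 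It then remains only to show $z_n\to z$, and for this completeness is no longer needed.

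Next I would bound $\|z_n-z\|$, separating the cases $n$ even and $n$ odd so that the appropriate map among $f,g$ appears, exactly as in Theorem 2.6. For $n=2m$, the triangle inequality applied to $y_{2m}=f(v_{2m})=(1-\beta_{2m})f(x_{2m})+\beta_{2m}T(x_{2m})$ together with (1.8.1) and $fz=gz=z$ gives
\[
\|y_{2m}-z\|\le\|f(x_{2m})-z\|-\beta_{2m}\,\psi(\|f(x_{2m})-z\|);
\]
then, applying the triangle inequality to $z_{2m}=(1-\alpha_{2m})f(x_{2m})+\alpha_{2m}T(v_{2m})$, using (1.8.1) once more on $\|T(v_{2m})-Tz\|$ with $f(v_{2m})=y_{2m}$, and discarding the nonnegative term $\alpha_{2m}\psi(\|y_{2m}-z\|)$, one gets
\[
\|z_{2m}-z\|\le\|f(x_{2m})-z\|-\alpha_{2m}\beta_{2m}\,\psi(\|f(x_{2m})-z\|).
\]
The case $n=2m+1$ is symmetric with $f$ and $g$ interchanged. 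Carrying out the same index bookkeeping as in Theorem 2.6 (identifying $\|f(x_{2m})-z\|$, resp.\ $\|g(x_{2m+1})-z\|$, with the norm of the preceding iterate) yields, for all $n$,
\[
\|z_n-z\|\le\|z_{n-1}-z\|-\alpha_{n-1}\beta_{n-1}\,\psi(\|z_{n-1}-z\|)\le\|z_{n-1}-z\|.
\]
Hence $\{\|z_n-z\|\}$ is a nonincreasing sequence of nonnegative reals, so it converges to some $r\ge0$. If $r>0$, then $r=\inf_n\|z_n-z\|\le\|z_n-z\|$ for every $n$, so monotonicity of $\psi$ gives $\alpha_n\beta_n\psi(r)\le\alpha_n\beta_n\psi(\|z_n-z\|)\le\|z_n-z\|-\|z_{n+1}-z\|$; summing from $n=0$ to $m$ and telescoping gives $\psi(r)\sum_{n=0}^{m}\alpha_n\beta_n\le\|z_0-z\|$, which contradicts $\sum\alpha_n\beta_n=\infty$ since $\psi(r)>0$. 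Therefore $r=0$, i.e.\ $z_n\to z$.

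The step I expect to require the most care is the derivation of the two-step estimate that produces the product $\alpha_n\beta_n$: one must apply the $(f,g)$-weakly contractive inequality (1.8.1) first to the inner iterate (controlling $\|y_n-z\|$ by $\|f(x_n)-z\|$ or $\|g(x_n)-z\|$) and then a second time to the outer iterate $T(v_n)$, all the while keeping track of which of $f,g$ is in force for the given parity and exploiting $fz=gz=z$ to collapse the two branches of the minimum; the subsequent identification of $\|f(x_n)-z\|$ or $\|g(x_n)-z\|$ with the norm of the previous iterate, needed for the telescoping, is the same bookkeeping already used in the proof of Theorem 2.6. Everything else — monotone convergence of $\{\|z_n-z\|\}$ and the summation argument against $\sum\alpha_n\beta_n=\infty$ — is routine.
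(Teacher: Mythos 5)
Your proposal is precisely the route the paper intends: Theorem 2.7 is stated without proof, with only the remark that the argument is similar to that of Theorem 2.6, and your two-stage use of (1.8.1) --- first on the inner iterate to get $\|y_{2m}-z\|\le\|f(x_{2m})-z\|-\beta_{2m}\psi(\|f(x_{2m})-z\|)$ (resp.\ with $g$ for odd indices), then on $T(v_n)$, discarding the extra nonnegative $\alpha_n\psi(\|y_n-z\|)$ term to obtain the decrease with the product $\alpha_n\beta_n$, followed by the monotone-limit and telescoping argument against $\sum\alpha_n\beta_n=\infty$ --- is exactly that adaptation, so in approach and level of rigor it matches the paper's own (omitted) proof. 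The one caveat is a defect you inherit from the paper rather than introduce: under (1.14.1) the preceding iterate is $z_{2m-1}=g(x_{2m})$, not $f(x_{2m})$ (and $z_{2m}=f(x_{2m+1})$, not $g(x_{2m+1})$), so the ``bookkeeping'' step that identifies $\|f(x_{2m})-z\|$ (resp.\ $\|g(x_{2m+1})-z\|$) with the norm of the previous iterate is exactly as unjustified in your sketch as the identical substitution made in the paper's proof of Theorem 2.6; as written, the quantity $f(x_{2m})$ is $f$ applied to an arbitrary $g$-preimage of $z_{2m-1}$ and is not controlled by earlier iterates, so a fully rigorous proof would need either to reformulate the scheme (e.g.\ with $g(x_{2n})$ and $f(x_{2n+1})$ in the convex combinations) or to add a hypothesis tying $f(x_{2n})$ and $g(x_{2n})$ together.
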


\section{Invariant Approximation Results}

Meinardus \cite{Meinardus1963} introduced the notion of invariant approximation. Brosowski \cite{Brosowski1969} initiated the study of invariant approximation using fixed point theory. In 2006, Beg and Abbas \cite{Beg2006} proved the existence of common fixed points for two selfmaps in the set of best approximations.

\begin{definition}
Let $M$ be a subset of a metric space $X$. The set $P_{M}(u)=\{x\in M:d(x,u)=\text{dist}(u,M)\}$ is called the set of best approximations to $u$ in $X$ out of $M$, where $\text{dist}(u,M)=\inf\{d(y,u):y\in M\}$.
\end{definition}

The following two lemmas are useful in our subsequent discussion.

\begin{lemma}
Let $(X,d)$ be a metric space and let $x_{0}\in X-M$. If $M$ is a compact subset of $X$ then $P_{M}(x_{0})\neq\emptyset$.
\end{lemma}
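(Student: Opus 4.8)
The plan is to exploit the compactness of $M$ together with the continuity of the distance function $y \mapsto d(y, x_{0})$ on $M$. First I would set $\delta = \text{dist}(x_{0}, M) = \inf\{d(y, x_{0}) : y \in M\}$, a well-defined nonnegative real number since $M$ is nonempty and $d \ge 0$. Because $\delta$ is the infimum of the set $\{d(y,x_{0}) : y \in M\}$, there exists a minimizing sequence $\{y_{n}\}$ in $M$ with $d(y_{n}, x_{0}) \to \delta$ as $n \to \infty$.

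Next I would invoke compactness: since $M$ is a compact subset of the metric space $X$, it is sequentially compact, so $\{y_{n}\}$ has a subsequence $\{y_{n_{k}}\}$ converging to some point $y^{*} \in M$. The map $y \mapsto d(y, x_{0})$ is continuous — indeed $1$-Lipschitz, by the triangle inequality — so $d(y_{n_{k}}, x_{0}) \to d(y^{*}, x_{0})$. On the other hand, $\{d(y_{n_{k}}, x_{0})\}$ is a subsequence of the convergent sequence $\{d(y_{n}, x_{0})\}$, whose limit is $\delta$, and hence $d(y^{*}, x_{0}) = \delta = \text{dist}(x_{0}, M)$.

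Therefore $y^{*} \in M$ satisfies $d(y^{*}, x_{0}) = \text{dist}(x_{0}, M)$, which means $y^{*} \in P_{M}(x_{0})$; in particular $P_{M}(x_{0}) \neq \emptyset$, as required. There is essentially no obstacle here: the only points worth flagging are that compactness in a general metric space is being used in its sequential form, and that the hypothesis $x_{0} \in X - M$ is not actually needed for nonemptiness — it merely guarantees $\delta > 0$, which matters for later applications but not for this lemma. An equally short alternative is to apply directly the fact that a continuous real-valued function on a compact set attains its minimum, to the function $y \mapsto d(y, x_{0})$ on $M$.
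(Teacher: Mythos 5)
Your proof is correct. The paper states this lemma without proof (it is cited as a standard fact preliminary to the invariant approximation results), and your argument is exactly the standard one: take a minimizing sequence for $\mathrm{dist}(x_{0},M)$, extract a convergent subsequence by sequential compactness of $M$, and use the $1$-Lipschitz continuity of $y\mapsto d(y,x_{0})$ to see that the limit point attains the infimum; equivalently, apply the extreme value theorem to this continuous function on the compact set $M$. Your side remarks are also accurate: the hypothesis $x_{0}\in X-M$ is not needed for nonemptiness of $P_{M}(x_{0})$ (it only ensures $\mathrm{dist}(x_{0},M)>0$), and the only implicit assumption worth stating is that $M$ is nonempty, which is tacit throughout the paper.
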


\begin{lemma}
The set of best approximations to $x_{0}$ in $X$ out of $M$ is closed, i.e., $P_{M}(x_{0})$ is closed, where $M$ is a subset of a metric space $X$.
\end{lemma}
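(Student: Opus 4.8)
The plan is to prove that $P_M(x_0)$ contains all its limit points, using nothing more than the continuity of the distance function $x\mapsto d(x,x_0)$. First I would fix $\delta=\text{dist}(x_0,M)$ and note that, by Definition~3.1, $P_M(x_0)=\{x\in M:d(x,x_0)=\delta\}$, which is exactly the intersection $M\cap S$, where $S=\{x\in X:d(x,x_0)=\delta\}$. The map $x\mapsto d(x,x_0)$ is continuous on $X$ (it is $1$-Lipschitz by the triangle inequality), so $S$, being the preimage of the closed singleton $\{\delta\}$, is closed in $X$.

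Next I would argue sequentially. Let $\{x_n\}$ be any sequence in $P_M(x_0)$ with $x_n\to x$. Since $d(x_n,x_0)=\delta$ for every $n$, and since
\[
|d(x_n,x_0)-d(x,x_0)|\le d(x_n,x)\to 0,
\]
it follows that $d(x,x_0)=\delta$, i.e.\ $x\in S$. Hence, as soon as we also know $x\in M$, we conclude $x\in P_M(x_0)$, which gives the asserted closedness.

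The only point requiring care — and the sole genuine obstacle — is the membership $x\in M$. If $M$ is assumed closed in $X$, then the limit of the sequence $\{x_n\}\subseteq M$ lies in $M$, and the computation above shows $P_M(x_0)$ is closed in $X$. For an arbitrary subset $M$, the same computation shows that $P_M(x_0)=M\cap S$ is closed in the relative topology of $M$ (equivalently, $P_M(x_0)$ is closed in $X$ whenever $M$ is), since $S$ is closed in $X$. I would state and use the lemma with this understanding; no completeness, compactness, convexity, or contractive hypothesis is needed, only the continuity of the metric.
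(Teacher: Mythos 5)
The paper states this lemma without giving any proof, so there is nothing to compare against line by line; your argument is the standard one and is correct: writing $P_M(x_0)=M\cap S$ with $S=\{x\in X: d(x,x_0)=\operatorname{dist}(x_0,M)\}$, the $1$-Lipschitz continuity of $x\mapsto d(x,x_0)$ makes $S$ closed, and the sequential argument is sound. Your caveat is also well placed and worth keeping: as literally stated for an \emph{arbitrary} subset $M$, the lemma is false in the sense of closedness in $X$ — for instance, with $X=\mathbb{R}^2$, $x_0$ the origin and $M$ the half-open upper unit semicircle $\{(\cos\theta,\sin\theta):0\le\theta<\pi\}$, every point of $M$ realizes the distance $1$, so $P_M(x_0)=M$, which is not closed in $X$. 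What the continuity argument gives unconditionally is closedness of $P_M(x_0)$ in the relative topology of $M$, and closedness in $X$ whenever $M$ is closed. Since in the paper's actual applications (Theorems 3.4 and 3.5) the set $M$ is assumed compact, hence closed, your proof yields exactly the form of the lemma that is used there, and indeed there $P_M(x_0)$ is even compact, which is what the later arguments need.
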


\begin{theorem}
Let $X$ be a metric space. Suppose that $T,f$ and $g$ are selfmaps of $X$, $T$ is continuous, $T$ is $(f,g)$-weakly contractive mapping and the pairs $(T,f)$ and $(T,g)$ are weakly compatible. Assume that $F(T)\cap F(f)\cap F(g)\neq\emptyset.$ Further, if either (i) $g$ and $T$ leaves $f-$invariant compact subset $M$ of $X$ as invariant with $f(P_{M}(x_{0}))\subseteq P_{M}(x_{0})$ (or) (ii) $f$ and $T$ leaves $g-$invariant compact subset $M$ of $X$ as invariant with $g(P_{M}(x_{0}))\subseteq P_{M}(x_{0})$, then $P_{M}(x_{0})\cap F(T)\cap F(f)\cap F(g)\neq\emptyset.$
\end{theorem}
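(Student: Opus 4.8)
The plan is to apply our common fixed point theorem, Theorem 2.1, to the restrictions of $T,f,g$ to the set of best approximations $D:=P_{M}(x_{0})$, after verifying that $D$ is a complete subspace of $X$ left invariant by all three maps. We take $x_{0}\in F(T)\cap F(f)\cap F(g)$ (nonempty by hypothesis), so that $Tx_{0}=fx_{0}=gx_{0}=x_{0}$. If $x_{0}\in M$ then $\text{dist}(x_{0},M)=0$, $P_{M}(x_{0})=\{x_{0}\}$, and the conclusion is immediate; so assume $x_{0}\in X-M$ and put $\delta:=\text{dist}(x_{0},M)>0$. Since $M$ is compact, Lemma 3.2 gives $P_{M}(x_{0})\neq\emptyset$, and by Lemma 3.3 the set $P_{M}(x_{0})$ is closed; being a closed subset of the compact set $M$, the set $D=P_{M}(x_{0})$ is compact, hence a complete subspace of $X$.

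The core of the argument is to show that $T$, $f$ and $g$ each carry $D$ into $D$. Work under hypothesis (i); hypothesis (ii) will follow by interchanging the roles of $f$ and $g$, which is legitimate because inequality (1.8.1) in the definition of ``$T$ is $(f,g)$-weakly contractive'' is symmetric in $f$ and $g$. By (i) we are given $f(D)\subseteq D$, and $T(M)\subseteq M$, $g(M)\subseteq M$. For $y\in D$ we have $fy\in D$, i.e. $d(x_{0},fy)=\delta$; applying (1.8.1) to the pair $(x_{0},y)$ and using $Tx_{0}=gx_{0}=x_{0}$ yields
\[
d(x_{0},Ty)=d(Tx_{0},Ty)\le d(gx_{0},fy)-\psi\bigl(d(gx_{0},fy)\bigr)=d(x_{0},fy)-\psi\bigl(d(x_{0},fy)\bigr)\le\delta ,
\]
and since $Ty\in T(M)\subseteq M$ forces $d(x_{0},Ty)\ge\delta$, we conclude $d(x_{0},Ty)=\delta$, i.e. $Ty\in D$. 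The remaining inclusion $g(D)\subseteq D$ — equivalently $d(x_{0},gy)=\delta$ for every $y\in D$ — is the delicate step, and I expect it to be the main obstacle: one has to extract an equality from (1.8.1) by choosing the pair of points so that the $\min$ selects the term carrying $d(\cdot,gy)$, and then combine this with $g(M)\subseteq M$, $f(D)\subseteq D$ and the fixed-point identities at $x_{0}$; the naive choices of pair return only a lower bound on $d(x_{0},gy)$, so the $\min$-structure of the $(f,g)$-weakly contractive condition must be exploited carefully here.

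Granting that $D$ is invariant under $T$, $f$ and $g$, the remaining hypotheses of Theorem 2.1 are checked routinely. Since $T$ is continuous and $D$ is compact, $T(D)$ is compact, so $\overline{T(D)}=T(D)\subseteq D$ is a complete subspace; the inclusions $\overline{T(D)}\subseteq f(D)$ and $\overline{T(D)}\subseteq g(D)$ are obtained along the same lines as the invariance computation above, identifying each point of $T(D)$ as a value of $f$, respectively of $g$, inside $D$. Weak compatibility of $(T,f)$ and of $(T,g)$ is a condition at coincidence points and is inherited by the restrictions to the invariant set $D$. Hence $T,f,g$, viewed as selfmaps of $D$, satisfy all the hypotheses of Theorem 2.1, which produces a (unique) common fixed point $z\in D=P_{M}(x_{0})$; therefore $z\in P_{M}(x_{0})\cap F(T)\cap F(f)\cap F(g)$, which is thus nonempty. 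The case of hypothesis (ii) is identical after exchanging $f$ and $g$ throughout.
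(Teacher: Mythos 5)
There is a genuine gap, and it sits exactly where you flagged trouble plus one place you did not. First, your central computation is misread. For $y\in D=P_{M}(x_{0})$ with $fy\in D$ you obtain the chain $\delta\le d(x_{0},Ty)\le d(x_{0},fy)-\psi\bigl(d(x_{0},fy)\bigr)=\delta-\psi(\delta)$; from this you cannot conclude $d(x_{0},Ty)=\delta$. What actually follows is $\psi(\delta)\le 0$, hence $\delta=0$ because $\psi(t)>0$ for $t>0$ — a contradiction with your standing assumption $\delta>0$. So this step does not prove $T(D)\subseteq D$; it shows the branch $x_{0}\in X-M$ collapses. That strict-decrease contradiction is in fact the engine of the paper's own proof: assuming $Ta\notin f(P_{M}(x_{0}))$, the paper first rules out $x_{0}=fa$ (since then $d(x_{0},Ta)\le d(gx_{0},fa)-\psi\bigl(d(gx_{0},fa)\bigr)=0$ would give $Ta=x_{0}=fa\in f(P_{M}(x_{0}))$), and then derives $d(x_{0},M)\le d(x_{0},Ta)\le d(x_{0},fa)-\psi\bigl(d(x_{0},fa)\bigr)<d(x_{0},fa)=d(x_{0},M)$, a contradiction, thereby concluding $T(P_{M}(x_{0}))\subseteq f(P_{M}(x_{0}))$ directly.

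Second, to invoke Theorem 2.1 on $P_{M}(x_{0})$ you need the image inclusions $\overline{T(D)}\subseteq f(D)$ and $\overline{T(D)}\subseteq g(D)$, not merely invariance of $D$ under $T$, $f$ and $g$; your remark that these are ``obtained along the same lines as the invariance computation'' does not hold up, because bounding $d(x_{0},Ty)$ never exhibits $Ty$ as $f(w)$ or $g(w)$ for some $w\in D$ — that identification is precisely what the paper's contradiction argument above supplies, together with compactness of $T(P_{M}(x_{0}))$ (continuity of $T$) to get $\overline{T(P_{M}(x_{0}))}=T(P_{M}(x_{0}))$ complete. Combined with the $g$-invariance of $D$, which you explicitly leave open, none of the hypotheses of Theorem 2.1 that involve $f(D)$ or $g(D)$ is verified, so the proposal does not close. (Your instinct that the $g$-side is delicate under hypothesis (i) is fair: the paper itself disposes of $T(P_{M}(x_{0}))\subseteq g(P_{M}(x_{0}))$ with a bare ``similarly,'' although (i) only provides $g(M)\subseteq M$ and not $g(P_{M}(x_{0}))\subseteq P_{M}(x_{0})$; but as submitted your argument establishes neither the $f$- nor the $g$-inclusion, whereas the paper at least proves the $f$-inclusion in full.)
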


\begin{proof}
Since $M$ is compact and by Lemma 3.2, we have $P_{M}(x_{0})\neq\emptyset$.

We now show that $T(P_{M}(x_{0}))\subseteq f(P_{M}(x_{0}))$ and $T(P_{M}(x_{0}))\subseteq g(P_{M}(x_{0}))$. Otherwise, there exist $a,b\in P_{M}(x_{0})$ such that $Ta\notin f(P_{M}(x_{0}))$ and $Tb\notin g(P_{M}(x_{0}))$.\\
Observe that $x_{0}\neq fa$ and $x_{0}\neq gb$.\\
For, if $x_{0}=fa$ then $fa\in F(T)\cap F(f)\cap F(g)$.\\
Since $a\in P_{M}(x_{0})$ we have $f(a)\in f(P_{M}(x_{0}))$, i.e., $x_{0}\in f(P_{M}(x_{0}))$.\\
Hence
\begin{align*}
0 &= d(x_{0},fa) = d(x_{0},M) \leq d(x_{0},Ta) = d(Tx_{0},Ta) \\
&\leq \min\{d(fx_{0},ga)-\psi(d(fx_{0},ga)),d(gx_{0},fa)-\psi(d(gx_{0},fa))\} \\
&\leq d(x_{0},fa)-\psi(d(x_{0},fa)) = 0.
\end{align*}
Thus $d(x_{0},Ta)=0$ implies $Ta=x_{0}=f(a)\in f(P_{M}(x_{0}))$, a contradiction to our assumption.\\
Hence $x_{0}\neq fa$.\\
Now,
\begin{align*}
d(x_{0},fa) &= d(x_{0},M) \leq d(x_{0},Ta) = d(Tx_{0},Ta) \\
&\leq \min\{d(fx_{0},ga)-\psi(d(fx_{0},ga)),d(gx_{0},fa)-\psi(d(gx_{0},fa))\} \\
&\leq d(x_{0},fa)-\psi(d(x_{0},fa)) < d(x_{0},fa),
\end{align*}
a contradiction. Hence $T(P_{M}(x_{0}))\subseteq f(P_{M}(x_{0}))$.\\
Similarly we can prove that $x_{0}\neq g(b)$ and $T(P_{M}(x_{0}))\subseteq g(P_{M}(x_{0}))$.\\
Since $P_{M}(x_{0})$ is closed subset of a compact set $M$, we have $T(P_{M}(x_{0}))$ is compact and hence $T(P_{M}(x_{0}))$ is closed and complete.
So by Theorem 2.1, $T$, $f$ and $g$ have a unique common fixed point in $P_{M}(x_{0})$.

Hence \(P_{M}(x_{o})\cap F(T)\cap F(f)\cap F(g)\neq\emptyset\).

\end{proof}

\begin{theorem}[3.5]
Let \(X\) be a metric space. Suppose that \(T, f\) and \(g\) are selfmaps of \(X\), \(T\) is continuous, \(T\) is \((f,g)\)-weakly contractive mapping and the pairs \((T,f)\) and \((T,g)\) are weakly compatible. Assume that either

(i) \(g\) and \(T\) leave \(f\)-invariant compact subset \(M\) of \(X\) as invariant with \(f(P_{M}(u))\subseteq P_{M}(u)\) for any \(u\in X\)

(or)

(ii) \(f\) and \(T\) leave \(g\)-invariant compact subset \(M\) of \(X\) as invariant with \(g(P_{M}(u))\subseteq P_{M}(u)\) for any \(u\in X\). \\
Further, if for each \(a\in P_{M}(u), d(x,Ta) < d(x,fa)\) and \(d(x,Ta) < d(x,ga)\) for each \(x\in X\), then \(T,f\) and \(g\) have a unique common fixed point in \(P_{M}(u)\) which is a best approximation of \(u\) in \(M\).
\end{theorem}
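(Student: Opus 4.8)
The plan is to run the proof of Theorem 3.3 almost verbatim, the one difference being that the hypothesis $F(T)\cap F(f)\cap F(g)\neq\emptyset$ used there is dropped and its role — forcing $T$ to send $P_M(u)$ into $f(P_M(u))$ and into $g(P_M(u))$ — is taken over by the strict distance condition. I would carry out the argument under alternative (i) and then remark that (ii) follows by interchanging $f$ and $g$.

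First I would record the standard facts about the set of best approximations. Since $M$ is compact, Lemma 3.2 gives $P_M(u)\neq\emptyset$ (the case $u\in M$ being trivial), and by Lemma 3.3 the set $P_M(u)$ is closed; as a closed subset of the compact set $M$ it is compact, hence complete. Under (i) we also have $f,g,T:M\to M$ and $f(P_M(u))\subseteq P_M(u)$. The heart of the proof is the two inclusions $T(P_M(u))\subseteq f(P_M(u))$ and $T(P_M(u))\subseteq g(P_M(u))$, which I would obtain by the same contradiction device as in Theorem 3.3: if some $a\in P_M(u)$ had $Ta\notin f(P_M(u))$, then $Ta\neq fa$; since $fa\in P_M(u)$ we get $d(u,fa)=\text{dist}(u,M)$, while $Ta\in T(M)\subseteq M$ forces $d(u,Ta)\ge\text{dist}(u,M)=d(u,fa)$, which contradicts the case $x=u$ of the hypothesis $d(x,Ta)<d(x,fa)$. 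Hence $T(P_M(u))\subseteq f(P_M(u))$, and the parallel computation using $d(x,Ta)<d(x,ga)$ handles the inclusion into $g(P_M(u))$. Consequently $T,f,g$ restrict to selfmaps of $P_M(u)$, the restricted pairs $(T,f)$ and $(T,g)$ remain weakly compatible, and $T$ remains $(f,g)$-weakly contractive on $P_M(u)$.

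Finally, since $T$ is continuous and $P_M(u)$ is compact, $T(P_M(u))$ is compact, hence closed and complete, so $\overline{T(P_M(u))}=T(P_M(u))\subseteq f(P_M(u))\cap g(P_M(u))$. Applying Theorem 2.1 with $X$ replaced by $P_M(u)$ produces a point $z\in P_M(u)$ that is a common fixed point of $T,f$ and $g$; being in $P_M(u)$, it satisfies $d(z,u)=\text{dist}(u,M)$, i.e.\ $z$ is a best approximation of $u$ in $M$. That $z$ is the unique common fixed point of $T,f,g$ on all of $X$ is then the usual one-line consequence of the $(f,g)$-weakly contractive inequality: any common fixed point $t$ obeys $d(t,z)=d(Tt,Tz)\le d(ft,gz)-\psi(d(ft,gz))=d(t,z)-\psi(d(t,z))$, forcing $\psi(d(t,z))\le 0$ and $t=z$.

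I expect the only real work to be the inclusion step: one must invoke the distance hypothesis at the correct point (namely $u$) and combine it carefully with the invariance of $M$ and of $P_M(u)$ under the relevant maps, being attentive to which of $f(P_M(u))\subseteq P_M(u)$ or $g(P_M(u))\subseteq P_M(u)$ is the one available in the alternative under consideration. Everything after that is a direct appeal to Lemmas 3.2–3.3, the observation that a compact set is complete, and Theorem 2.1, together with computations already performed earlier in the paper.
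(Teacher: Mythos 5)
Your proposal follows essentially the same route as the paper's own proof: reduce to the case $u\in X-M$, note that $P_{M}(u)$ is nonempty, closed and compact via Lemmas 3.2--3.3, obtain $T(P_{M}(u))\subseteq f(P_{M}(u))$ and $T(P_{M}(u))\subseteq g(P_{M}(u))$ by the same contradiction argument that evaluates the strict distance hypothesis at $x=u$ together with $fa\in P_{M}(u)$ and $Ta\in T(M)\subseteq M$, and then apply Theorem 2.1 on $P_{M}(u)$. The only additions (closedness and completeness of $T(P_{M}(u))$ from continuity and compactness, and the one-line global uniqueness remark) are harmless and consistent with the paper's treatment, so the proposal is correct in the same sense and to the same standard as the published argument.
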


\begin{proof}
For each \(u\in X, P_{M}(u)\neq\emptyset\) and compact. If \(u\in M\) we have \(P_{M}(u)=\{u\}\). Then by hypotheses \(f(u)=u\) and by (i) \(g(u)=u, T(u)=u\) so that the theorem follows.\\
Thus, without loss of generality, we assume that \(u\in X-M\).

Since \(P_{M}(u)\neq\emptyset\) and compact, by (i) we have \(g(P_{M}(u))\subseteq P_{M}(u)\) and \(T(P_{M}(u))\subseteq P_{M}(u)\). Thus \(T,f\) and \(g\) are selfmaps of \(P_{M}(u)\).

We now show that \(T(P_{M}(u)) = T(P_{M}(u)) \subseteq f(P_{M}(u))\) and \(\overline{T(P_{M}(u))} = T(P_{M}(u)) \subseteq g(P_{M}(u))\).\\
Otherwise, there exist \(a,b\in P_{M}(u)\) such that \(Ta\notin f(P_{M}(u))\) and \(Tb\notin g(P_{M}(u))\).\\
Now,
\[
d(u,fa) = d(u,M) \leq d(u,Ta) < d(u,fa) = d(u,M),
\]
a contradiction. Also
\[
d(u,gb) = d(u,M) \leq d(u,Tb) < d(u,gb) = d(u,M),
\]
a contradiction.
Hence \(T(P_{M}(u))\subseteq f(P_{M}(u))\) and \(T(P_{M}(u))\subseteq g(P_{M}(u))\).

Thus \(T, f\) and \(g\) satisfy all the hypotheses of Theorem 2.1 on \(P_{M}(u)\), so that by Theorem 2.1, \(T,f\) and \(g\) have a unique common fixed point in \(P_{M}(u)\).

Hence the theorem follows.
\end{proof}

\section{ Conclusion}

In this work, we introduced and investigated a new class of mappings, termed \((f,g)\)-weakly contractive mappings, which extend the notions of weakly contractive and weakly compatible selfmaps to a triad of mappings \(T\), \(f\), and \(g\) on a metric space. Under appropriate assumptions on convexity, completeness, and weak compatibility, we established a common fixed point theorem guaranteeing the existence and uniqueness of a common fixed point of \(T\), \(f\), and \(g\).

Furthermore, we demonstrated that the modified Mann and modified Ishikawa iterative schemes associated with \(T\), \(f\), and \(g\) converge to this unique common fixed point in normed spaces. In addition, invariant approximation results were obtained, showing that the common fixed point can also be realized as a best approximation from an invariant subset.

The results presented here generalize and unify several classical and recent fixed point theorems, including those of Beg and Abbas~\cite{Beg2006} and Azam and Shakeel~\cite{Azam2008}, and provide a broader framework for studying weakly contractive-type mappings. Future research directions include extending these results to more generalized settings such as \(b\)-metric, partial metric, or modular function spaces, and exploring their applications to nonlinear integral and differential equations.

\end{document}